\newtheorem{Th}{Theorem}[section]
\newtheorem{lemma}[Th]{Lemma}
\newtheorem{remark}[Th]{Remark}
\newtheorem{Prop}[Th]{Proposition}
\newtheorem{Def}[Th]{Definition}
\newtheorem{Rem}[Th]{Remark}
\newtheorem{a priori condition}[Th]{A priori condition}
\newtheorem{assumption}[Th]{Assumption}
\titleformat*{\section}{\normalsize\bfseries}
\titleformat*{\subsection}{\normalsize\bfseries}
\renewcommand{\thesection}{{{\normalsize\arabic{section}}}}
\renewcommand{\theequation}{{{\thesection.\arabic{equation}}}}
\newcommand{\eproof}{\mbox{\ }~\hfill
\mbox{\large $\Box$}}
\newcommand{\pf}{\noindent{\bf Proof}}
\def\N{{\mathbb N}}
\def\R{{\mathbb R}}
\def\DS{\displaystyle}
\def\dvg{\mbox{\rm div}}
\definecolor{purple}{rgb}{.75, 0, .25}
\long\def\@makefntext#1{\parindent 1em\noindent
\@hangfrom{\hbox to 1.8em{\hss$^{\@thefnmark}$}}#1}
\begin{document}

\title{Anisotropic extended Burgers model, its relaxation tensor and properties of the associated Boltzmann viscoelastic system}

\author{Maarten V. de Hoop \thanks{Simons Chair in Computational and Applied
Mathematics and Earth Science, Rice University, Houston TX, USA (Email: mdehoop@rice.edu).}\and
\qquad Masato Kimura\thanks{Faculty of Mathematics and Physics, Kanazawa University, Kanazawa 920-1192, Japan (mkimura@se.kanazawa-u.ac.jp).}\and
Ching-Lung Lin\thanks{Department of Mathematics, National Cheng-Kung University, Tainan 701, Taiwan (Email:
cllin2@mail.ncku.edu.tw).}\and
Gen Nakamura\thanks{Department of
Mathematics, Hokkaido University, Sapporo 060-0808, Japan and Research Center of Mathematics for Social Creativity, Research Institute for Electronic Science, Hokkaido University, Sapporo 060-0812, Japan (Email: gnaka@math.sci.hokudai.ac.jp).
}\and
Kazumi Tanuma\thanks{Department of Mathematics, Faculty of Science and Technology, Gunma University, Kiryu 376-8515, Japan (Email: tanuma@gunma-u.ac.jp).}}

\maketitle

\begin{abstract}
We provide a new method for constructing the anisotropic relaxation tensor and proving its exponential decay property for the extended Burgers model (abbreviated by EBM). The EBM is an important viscoelasticity model in rheology, and used in Earth and planetary sciences. Upon having this tensor, the EBM can be converted to a Boltzmann-type viscoelastic system of equations (abbreviated by BVS). Historically, the relaxation tensor for the EBM is derived by solving the constitutive equation using the Laplace transform. (We refer to this approach by the L-method.) Since inverting the inverse Laplace transform needs a partial fractions expansion, the L-method needs to assume that the EBM elasticity tensors satisfy a commutivity condition. The new method not only avoids this condition but also enables obtaining several important properties of the relaxation tensor, including its positivity, smoothness with respect to the time variable, its exponential decay property together with its derivative, and its causality. Furthermore, we show that the BVS converted from the EBM has the exponential decay property. That is, any solution for its initial boundary value problem with homogeneous boundary data and source decays exponentially as time tends to infinity.

\bigskip

\noindent
{\bf Keywords:} viscoelasticity, relaxation tensor, extended Burgers model, Boltzmann-type viscoelastic system.\\

\noindent
{\bf MSC(2010): } 35Q74, 35Q86, 35R09.
\end{abstract}

\renewcommand{\theequation}{\thesection.\arabic{equation}}

\section{Introduction}
\label{sec1}
\setcounter{equation}{0}


The anisotropic extended Burgers model (EBM) is a model of viscoelasticity inherently constructed from springs and dashpots.
The EBM is a parametric model widely applied in the Earth and planetary sciences \cite{Hetland, Ivins}. It is fundamentally distinct from the anisotropic extended Maxwell model, which can be easily converted into a Boltzman-type viscoelastic system of equations, abbreviated by BVS \cite{HKLN}. 

The constitutive equation between the stress $\sigma(x,t)$ and strain $e(x,t)$ of a Boltzman-type system is given in a form where $\sigma(x,t)$ is expressed in terms of an integral of the time derivative $\dot e(x,t)$ with a kernel, $G$, identified as the relaxation tensor. That is,
\begin{equation*}\label{Boltzmann constitutive eq}
\sigma(x,t)=\int_0^t G(x,t,s)\,\dot e(x,s)\,ds .
\end{equation*}
We note that this is a rough way to introduce the relaxation tensor. (We refer to the second paragraph after Assumption~\ref{fundamental assumption} for the precise explanation.)

In this paper, we will obtain the relaxation tensor for the EBM in the convolutional form:
\begin{equation*}\label{convolutional form}
G(x,t,s)=G(x,t-s),\,\,0\le s\le t .
\end{equation*}
In general, the relaxation tensor does not have to be of the convolutional form. The relaxation tensor of the general type, $G(x,t,s)$, was considered before in \cite{Dafermos}. 


As suggested by Anderson and Minster \cite{AndersonMinster_1979}, Andrade’s rheology and fractional power law can be interpreted as a continuum of Kelvin-Voigt elements in series, which can itself be approximated by a discrete series of Kelvin-Voigt elements (e.g., Birk and Song \cite{BirkSong_2010} and Ben Jazia \textit{et al}. \cite{JaziaLombardBellis2013}). The resulting model is known as the extended Burgers model.

While capturing the full range of the observed mechanical behavior of the upper mantle materials and reconciling the behavior of the mantle across the ``complete'' spectrum of geodynamic time-scales, the EBM has emerged as the model of choice \cite{YuenPeltier_1982}. Dissipation data with frequency dependence of attenuation across, say, six orders of magnitude, are indeed typically best fitted to this model. The EBM yields a consistent description across normal modes, seismic body waves and body tides, and from laboratory ultrasound to the Wilson cycle; see Lau and Faul \cite{LauFaul_2019}. It also captures the long time scale of mantle convection on which Earth behaves viscously. 
The Andrade creep model, and EBM, have been introduced in the astronomical community by Efroimsky and Lainey \cite{EfroimskyLainey_2007}; it has been applied, for example, to Enceladus' dissipation of energy due to forced libration (see Gevorgyan \textit{at al}. \cite{GevorgyanBoueRagazzoRuizCorreia_2020}).


Notably, though anisotropy is widely identified in purely elastic model descriptions of the mantle, in the geophysics literature it has not yet been included in the EBM description in view of the challenges that it has posed.  
The authors are not aware of any systematic methods to derive the relaxation tensor---that is, a Boltzmann-type representation---for the EBM, especially in the anisotropic case.
Carcione \cite{Carcione} studied anisotropic viscoelasticity in the context of wave propagation and introduced a condition that reappears in our analysis using spectral representations, described below in Section \ref{section 3}.

The contributions of this paper are the following:
\begin{enumerate}[(i)]
\item
A new method to construct the relaxation tensor under weak conditions, while developing its properties.
\item
A proof of exponential decay of solutions $t \rightarrow \infty$ of the initial boundary value problem of the BVS with homogeneous boundary data and source term, obtained with the relaxation tensor for the EBM.
\end{enumerate}

\noindent
The remainder of this paper is organized as follows. In Section 2, we introduce the EBM, its relaxation tensor, and the constitutive equation in an integro-differential form. In Section 3, we make a brief visit to the L-method which needs a commutativity assumption for the relevant elasticity tensors. In Section 4, an ordinary differential system of equations related to the EBM is introduced, and we observe that this is equivalent to the system of integro-differential equations introduced in the previous section. We also introduce tensor block matrices playing a role in Section 5, which is devoted to proving the mentioned aims, (i) and (ii) above. The last section contains the conclusion and a discussion.

\section{EBM and BVS}\label{EBM etc}
\setcounter{equation}{0}

In this section, we introduce the EBM and relaxation tensor. Then, we present the integro-differential equations for the total strain and total stress. The usual L-method given in Section \ref{section 3} uses these equations and the Laplace transform to derive the relaxation tensor assuming that the elasticity tensors for the springs satisfy a commutativity condition.
At the end of this section, we give an initial boundary value problem for the BVS assuming that we have constructed the relaxation tensor, and give a sufficient condition such that solutions of the mentioned problem decay exponentially in time. 

We let $\Omega$ be a bounded domain in the $d$-dimensional Euclidean space ${\mathbb R}^d$ with $d=2,3$. We assume that the boundary $\Gamma=\partial\Omega$ of $\Omega$ is Lipschitz smooth and denote its outward unit normal by $\nu\in \R^d$. 
We interpret $\Omega$ as a reference domain on which we consider a small viscoelastic deformation. 

Now, for a while, we will ignore the smoothness of the functions and just focus on introducing more notation used throughout the paper. We denote a displacement for a small deformation of $\Omega$ by $u=u(x,t)\in\R^d$ for a position a.e. $x\in \Omega$ and a time $t\in [0,T)$, where $T\in (0,\infty ]$. We also define the infinitesimal strain tensor by ${e[u]:=\frac{1}{2} \{\nabla u +(\nabla u^T)^{\mathfrak{t}}\} : \Omega\times [0,T)\to \R^{d\times d}_\text{sym}}$, where $\nabla u$ is the gradient of $u$ with respect to $x$ and $(\nabla u^T)^{\mathfrak{t}}$ is the transpose of $\nabla u$.

Further, we give the following definitions for matrices and tensors which will be used throughout the paper. For matrices $A=(a_{pq}),~B=(b_{pq})\in \R^{d\times d}$, their Frobenius inner product and the norm $|A|$ are defined by
$A:B:=a_{pq}b_{pq}$, $|A|:=\sqrt{A:A}$, respectively. Here, we have used Einstein's summation convention. For a matrix $\xi=(\xi_{pq})\in \R^{d\times d}$ and
a fourth-order tensor 
$C=(c_{pqrs})\in \R^{d\times d\times d\times d}$,
their product is denoted by
$C\xi:=(c_{pqrs}\xi_{rs})\in\R^{d\times d}$.
The product of two fourth-order tensors $C=(c_{pqrs})$ and $D=(d_{pqrs})$ is defined by $CD=(c_{pqkl}d_{klrs})$ as well.
Also, the set of $d$-dimensional symmetric matrices is represented by $\R^{d\times d}_\text{sym}$.
For a fourth-order tensor 
$C=(c_{pqrs})$, it is said that the major symmetry condition is satisfied when $c_{pqrs}=c_{rspq}$ holds and that the minor symmetry condition is satisfied when $c_{pqrs}=c_{qprs}=c_{pqsr}$
holds, for all $p,q,r,s=1,2,\cdots,d$. Define the transpose $C^{\mathfrak{t}}$ of $C=(c_{pqrs})$ by $C^{\mathfrak{t}}:=(C_{rspq})$. Then, the major symmetry can be given as $C^{\mathfrak{t}}=C$. 
When the major and minor symmetries are both satisfied for $C$, it is said that full symmetry is satisfied for $C$, and we denote $C\in{\mathbb R}_{\text{f-sym}}^{d\times d\times d\times d}$. For fourth-order tensors $C,\,D$ satisfying the major symmetry, we write $C\le D$ if
$(C\xi):\xi\le(D\xi):\xi$ for all $\xi\in{\mathbb R}^{d\times d}$ is satisfied. We remark that a linear map from $\R^{d\times d}_\text{sym}$ into itself is uniquely represented by
a fourth-order tensor $C$ with the minor symmetry
as $\R^{d\times d}_\text{sym}\ni \xi\longmapsto C\xi\in \R^{d\times d}_\text{sym}$.

From this point on, we assume that the displacement $u=u(x,t)\in C^2([0,T); H^1(\Omega;{\mathbb R}^d))$, the total strain $e=e(x,t)\in C^1([0,T); L^2(\Omega;{\mathbb R}^{d\times d}_{\text{sym}}))$, and the total stress \\ $\sigma(x,t)\in C^1([0,T); L^2(\Omega;{\mathbb R}^{d\times d}_{\text{sym}}))$. Here, $H^1(\Omega;{\mathbb R}^d)$ is the set of all ${\mathbb R}^d$-valued functions in the $L^2$-based Sobolev space of order $1$. Also, for any Banach space $S$, such as ${\mathbb R}$ and ${\mathbb R}_{\text{sym}}^{d\times d\times d\times d}$, $L^p(\Omega;S)$ with $1\le p\le\infty$ and $C^m([0,T); S)$ with $m\in{\mathbb N}\cup\{0\}$ are the sets of all $S$-valued $L^p$-functions on $\Omega$ and the sets of all $S$-valued $m$-times continuously differentiable functions defined on $[0,T)$, respectively. Then, the EBM is a spring-dashpot model connecting one Maxwell model $M_0$ and $n$-number of Kelvin-Voigt models $K_i,\,i=1,2,\cdots,n$ in series (see Figure 1). 
\begin{figure}[htb]
\centering
\fbox{
\includegraphics[width=0.54\textwidth]{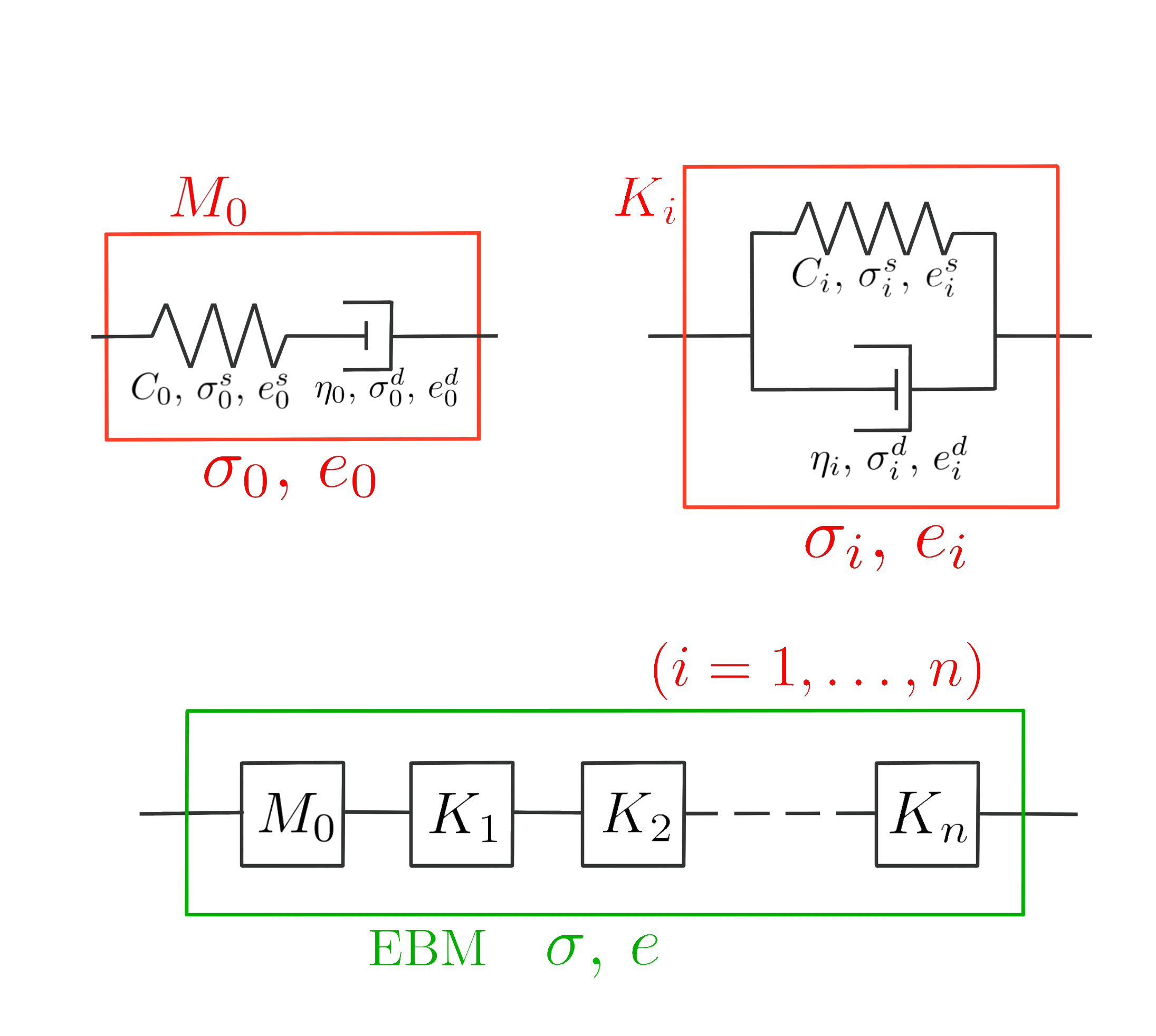}}
\caption{Extended Burgers model: $M_0$: Maxwell component, $K_i$: Kelvin-Voigt component $(i=1,\cdots,n)$.}
\label{fig:EBM}
\end{figure}
Corresponding to each of these models, we denote its pair of strain and stress as $(e_i,\sigma_i)\in C^1([0,T); L^2(\Omega;{\mathbb R}^{d\times d}_{\text{sym}}))\times C^1([0,T); L^2(\Omega;{\mathbb R}^{d\times d}_{\text{sym}}))$ in accordance with the labeling $i=0,1,\cdots,n$. Likewise, we denote their pair of elasticity tensor and viscosity $(C_i,\eta_i)\in L^\infty(\Omega;{\mathbb R}^{d\times d\times d\times d})$ $\times L^\infty(\Omega;{\mathbb R})$ as well.  Then, their relation for each model and the equation of motion for the EBM are given as follows
\begin{align}
M_0~&:~
\begin{cases}
  \sigma_0^s=C_0e_0^s,\quad \eta_0\dot{e}_0^d=\sigma_0^d\\
  e_0=e_0^s+e_0^d,\quad \sigma_0=\sigma_0^s=\sigma_0^d
\end{cases} \label{M0}\\
K_i~&:~
\begin{cases}
  \sigma_i^s=C_ie_i^s,\quad \eta_i\dot{e}_i^d=\sigma_i^d\\
  e_i=e_i^s=e_i^d,\quad \sigma_i=\sigma_i^s+\sigma_i^d
\end{cases}
\quad (i=1,\cdots,n)\label{Ki}\\
\mbox{EBM}~&:~
\begin{cases}
  e=e[u]\\
  \rho \ddot{u}=\dvg\, \sigma \\
  \DS{e=\sum_{i=0}^ne_i,\quad \sigma=\sigma_0=\sigma_1=\cdots =\sigma_n}.
\end{cases}\label{EBM}
\end{align}
Here, ``$\cdot$'' stands for the $t$-derivative.


\medskip
Without duplication of notation, \eqref{M0}, \eqref{Ki}, and \eqref{EBM} can be given more concisely as follows:
\begin{equation}\label{concise EBM}
\left\{
\begin{array}{ll}
e_0=e_0^s+e_0^d,\,\,e=e_0+\displaystyle\sum_{i=1}^n e_i,\\
\\
\sigma=C_0\,e_0^s=\eta_0\,\dot e_0^d=C_i\,e_i+\eta_i\,\dot e_i,\,\, i=1,2,\cdots,n
\end{array}
\right.
\end{equation}

\medskip

\noindent
We impose
\begin{assumption}\label{fundamental assumption}${}$
\begin{itemize}    
\item [{\rm(i)}] There exists $\eta_{\ast}>0$ such that each $\eta_i$ satisfies $\eta_i(x)\ge \eta_{\ast},\, \text{a.e.}\, x\in\Omega$.
\item [{\rm(ii)}] Each $C_i\in L^\infty\left(\Omega;{\mathbb R}_{\text{\rm{f-sym}}}^{d\times d\times d\times d}\right)$. 
\item [{\rm (iii)}] Each $C_i(x)=\left(c^{(i)}_{pqrs}(x)\right)$ satisfies the strong convexity condition:
\[
(C_i(x)\xi):\xi \ge c_*|\xi|^2,\,\,\xi\in\R^{d\times d},\,\, \text{a.e.}\,x\in\Omega
\]
for some $c_*>0$.
\end{itemize}
\end{assumption}

\medskip
We set $\phi_i:=e_i^d$ for $i=0,\cdots,n$
and define $\phi:=(\phi_1,\cdots,\phi_n)$ 
$:~\Omega\times [0,T)\to (\R^{d\times d}_\text{sym})^n$, $\bar\phi:=(\phi_0,\phi)=(\phi_0,\cdots,\phi_n)$ 
$:~\Omega\times [0,T)\to (\R^{d\times d}_\text{sym})^{n+1},$ 
and also set $\psi :=e_0^s : \Omega\times [0,T)\to \R^{d\times d}_\text{sym}$.
It should be remarked here that throughout this paper, spatial variables will not be specified unless necessary. Also, $\text{a.e.}\,x\in\Omega$ will not be mentioned as long as it can be understood from the context.

By ignoring all other variables, the relaxation tensor relates the instantaneous change of total strain, $\dot e(t)$, to the total stress, $\sigma(t)$. This relation can be considered as a convolutional operator $\mathcal{R}$ of a one-port mechanical system (see \cite{Smith} and subsections 10.1-10.3 of \cite{Zemanian}). More precisely, $\mathcal{R}$ is an operator on the space $\mathcal{D}_{R}'$ of right-sided real Schwartz distributions in ${\mathbb R}$; that is, the set of real Schwartz distributions in ${\mathbb R}$ with supports bounded from below by a real constant depending on each distribution. In this setting, we assume $e(t),\,\sigma(t)\in\mathcal{D}_R'$. The mentioned regularities for $e(t),\,\sigma(t)$ are those over the time interval $[0,T)$. Then, the relaxation tensor $G(t)$ is given as a real Schwartz distribution $G(t)$ in ${\mathbb R}$ such that $\mathcal{R}\cdot=G\ast\cdot$ satisfying the causality, $G(t)=0,\,t<0$. Concerning the other variables, we will look for $G(t)$ in the space $\mathcal{D}'({\mathbb R};L^\infty(\Omega; {\mathbb R}^{d\times d\times d\times d}))$, the space of $L^\infty(\Omega;{\mathbb R}^{d\times d\times d\times d})$-valued Schwartz distributions defined in ${\mathbb R}$. Hence, once having the relaxation tensor $G(t)=G(x,t)$, we have the constitutive equation $\sigma=G\ast\dot e$ which is formally written as
\begin{equation}\label{constitutive eq and relaxation}
\left\{\begin{array}{ll}
\sigma(x,t)=\displaystyle{\int_0^t} G(x,t-s)\dot e(x,s)\,ds \\[1em]
\qquad\qquad\text{for any continuously differentiable right-sided $e(x,s)$ in $s$}.
\end{array}
\right.
\end{equation}

Due to the symmetry of the strain tensor $e(x,s)$ and stress tensor $\sigma(t,s)$, it is natural to assume that $G(x,t)$ satisfies the minor symmetry. Furthermore, it turns out that $G(x,t)$ must be smooth with respect to $t\ge0$ for each fixed a.e.\ $x\in\Omega$, and is exponentially decaying as $t\rightarrow\infty$ uniformly with respect to a.e.\ $\,x\in\Omega$. Hence, we could have considered $G(x,t)$ as a $L^\infty(\Omega;{\mathbb R}^{d\times d\times d\times d})$-valued rapidly decreasing function defined on $[0,\infty)$.

\medskip
Summarizing what we have mentioned above, we a priori assume the following conditions for the relaxation tensor $G=G(x,t)\in \mathcal{D}'({\mathbb R};L^\infty(\Omega;{\mathbb R}^{d\times d\times d\times d}))$:
\medskip
\begin{a priori condition}\label{assumption for G}${}$
\begin{itemize}
\item [{\rm (i)}]
The fourth-order tensor $G(x,t)\in\R^{d\times d\times d\times d}$ is zero for $t<0,\,\text{a.e. $x\in\Omega$}$, and it is continuous with respect to $t\ge0$ for each fixed $\text{a.e. $x\in\Omega$}$.
\item [{\rm (ii)}]
$G(x,t)=(G_{ijkl}(x,t))$ satisfies the minor symmetry given as
$$
G_{ijkl}(x,t)=G_{jikl}(x,t)=G_{ijlk}(x,t),\,\,t\ge0,\,\,\text{a.e. $x\in\Omega$}
$$
for $1\le i,j,k,l\le d$.
\end{itemize}
\end{a priori condition}

\bigskip

\noindent
Now, we prepare to gradually move towards getting the relaxation tensor. To begin with, we assume that
\begin{equation}\label{assumption for integro-diff}
e_i(0)=0,\,\,i=1,2,\cdots, n.
\end{equation}
Then, we can derive the following integro-differential equation,
\begin{equation}\label{integro-diff}
\dot e(t)=\left(C_0^{-1}\dot\sigma(t)+\eta_0^{-1}\sigma(t)\right)+\displaystyle
\sum_{i=1}^n\left\{\eta_i^{-1}\sigma(t)-\eta_i^{-2} C_i\int_0^t e^{-(t-s)\eta_i^{-1}C_i}\sigma(s)\,ds\right\}.
\end{equation}
In fact, from \eqref{M0} and \eqref{EBM}, we deduce that
\begin{equation}\label{star1}
\dot e_0=C_0^{-1}\dot\sigma+\eta_0^{-1}\sigma.
\end{equation}
Also, from \eqref{Ki},
\begin{equation}\label{star2}
\dot e_i+\eta_i^{-1}C_ie_i=\eta_i^{-1}\sigma,\,\,\,1\le i\le n.    
\end{equation}
Solving each of these ordinary differential equations for $e_i$ using that $e_i(0)=0$, we find that
\begin{equation}\label{star3}
e_i(t)=\int_0^t e^{-(t-s)\eta_i^{-1}C_i}\eta_i^{-1}\sigma(s)\,ds,\,\,1\le i\le n.
\end{equation}
Then, differentiating these by $t$, we obtain
\begin{equation}\label{star4}
\dot e_i(t)=\eta_i^{-1}\sigma(t)-\eta_i^{-2}C_i\int_0^t e^{-(t-s)\eta_i^{-1}C_i}\sigma(s)\,ds,\,\,1\le i\le n.
\end{equation}
Then, from \eqref{star1}, \eqref{star4} and \eqref{EBM}, we immediately arrive at \eqref{integro-diff}.

\begin{remark}\label{remark for deriving integro-diff}
As a matter of fact, \eqref{integro-diff} follows from just assuming $e(t),\,\sigma(t)\in\mathcal{D}_R'$ ignoring the other variables. This is because the primitive of any $z\in\mathcal{D}_R'$ with $z'\in\mathcal{D}_R'$ is given as $(1_+\ast z)(t)=\int_0^t z(s)ds\,\,(t>0),\,\,=0\,\,(t\le0)$, where $1_+(t)$ is the unit step function defined as 
$$
1_+(t):=\left\{
\begin{array}{ll}
0\,\,&t<0,\\
\frac12\,\,&t=0,\\
1\,\,&t>0
\end{array}
\right.
$$
(see page 274, (2) of \cite{Zemanian}).
    
\end{remark}

\medskip
Next, we will formulate an initial boundary value problem for $(u,\overline\phi)$. For that, let $\Gamma_N$ be a non-empty open subset of $\partial \Omega$, and define $\Gamma_D:=\partial\Omega\setminus\overline{\Gamma_N}$. If $d=3$ and $\Gamma_D\not=\emptyset$, we assume that $\overline{\Gamma_D}\cap\overline{\Gamma_N}$ is Lipschitz smooth. Here, for a set $\Lambda\subset\R^d$,  we denoted its closure by $\overline\Lambda$.
Assume that the displacement $u$ is fixed to zero at $\Gamma_D$, the traction at $\Gamma_N$ is free, and no external force is applied to $\Omega$. 

Then, the initial boundary value problem for the EBM with an initial data $(u^0,\dot u^0,\bar\phi^0)$ for $(u,\dot u,\bar\phi)$ at the initial time $t=0$ is to find $(u,\bar\phi):\Omega\times [0,T)\rightarrow \R^d\times(\R^{d\times d}_\text{sym})^{n+1}$ which satisfies
 
\begin{equation}\label{ebm-prob}
\left\{
\begin{array}{ll}
\rho \ddot{u}=\dvg\, \sigma &\mbox{\rm in}~\Omega\times (0,T),\\
\eta_0\dot{\phi}_0=\sigma &    \mbox{\rm in}~\Omega\times (0,T),\\
\eta_i\dot{\phi}_i=\sigma -C_i\phi_i \quad (i=1,\cdots,n)&    \mbox{\rm in}~\Omega\times (0,T),\\
u=0&\mbox{\rm on}~\Gamma_D\times (0,T),\\
\sigma\nu=0 &\mbox{\rm on}~\Gamma_N\times (0,T),\\
(u, \dot{u}, \bar\phi)|_{t=0}=(u^0, \dot{u}^0, \bar\phi^0)& \mbox{\rm in}~\Omega,
\end{array}
\right.
\end{equation}
where the total stress tensor $\sigma$ is defined by
\begin{align}\label{sigma}
\sigma :=C_0\psi,\quad
  \psi:=
  e[u]-\sum_{i=0}^n\phi_i,
\end{align}
and $\text{``$\mid_{t=0}$''}$ denotes the restriction to $t=0$.

\medskip
The importance of the relaxation tensor is that it converts the EBM to the BVS. In fact, we can give the following initial boundary value problem with mixed-type boundary conditions for the BVS. Suppose that there is no external force, $\Gamma_D$ is fixed, and $\Gamma_N$ is traction free. Then, since $\dot{e}=e[\dot{u}]$ due to $e=e[u]$, an initial and boundary value problem
of the BVS with the constitutive equation \eqref{constitutive eq and relaxation} of the convolutional type is given as follows:
\begin{equation}\label{ebm-prob2}  
\left\{
\begin{array}{ll}
\rho \ddot{u}=\dvg\, \sigma &\mbox{\rm in}~\Omega\times (0,T),\\
{\DS \sigma(x,t)=\int_0^t G(x,t-s)e[\dot{u}](x,s)\,ds} & \mbox{\rm in}~\Omega\times (0,T),\\
u=0&\mbox{\rm on}~\Gamma_D\times (0,T),\\[5pt]
\sigma\nu=0 &\mbox{\rm on}~\Gamma_N\times (0,T),\\[5pt]
(u, \dot{u})|_{t=0}=(u^0, \dot{u}^0)& \mbox{\rm in}~\Omega.
\end{array}
\right.
\end{equation}
Then, according to the study \cite{HLN}, the sufficient conditions for the relaxation tensor $G(x,t)$ to have the solution of \eqref{ebm-prob2} with the initial data $(u^0,\dot u^0)\in H_+ \times H_+$, $H_+:=\{f\in H^1(\Omega):\,f\big|_{\Gamma_D}=0\}$ exponentially decaying as $t\rightarrow\infty$ are given in:

\begin{assumption}{\rm(Exponential decay of solutions)}\label{exponential} 
\begin{itemize}
\item[{\rm (i)}]  $\Gamma_D\not=\emptyset$.
\item[{\rm (ii)}] $K(x):=G(x,0)\in L^\infty(\Omega)$ and $H(x,t):=-\dot G(x,t)\in C^2([0,\infty); L^\infty(\Omega))$.
There exist constants  $\kappa_j>0,\,j=1,\cdots,4$ and $\tilde\kappa_4>0$ such that for $\text{a.e.}\,x\in\Omega,\,\,t\in[0,\infty)$,
\begin{equation}\label{derivatives}
-\kappa_1 H(x,t)\le\dot H(x,t)\le-\kappa_2 H(x,t),\,\ddot H(x,t):=\partial_t^2 H(x,t)\le\kappa_3 H(x,t)
\end{equation}
and
\begin{equation}\label{exp decay of the |G| and |dot G|}
|H(x,t)|+|\dot H(x,t)|\le \kappa_4e^{-\tilde{\kappa}_4 t}.
\end{equation}
\item[{\rm (iii)}] $K(x),\,H(x,t)$ satisfy the major symmetry condition.
\item[{\rm (iv)}]
$K(x),\,H(x,t)$ satisfy
the strong convexity condition.
\item[{\rm (v)}] $\displaystyle{e^{-\kappa_5 T}|\xi|^2\le \left\{\left(K(x)-\int_0^T H(x,t)\,dt\right)\xi \right\}:\xi\le e^{-\kappa_6 T}|\xi|^2,\,\,\xi\in{\mathbb R}_{\text{sym}}^{d\times d},\,\,T>0}$ holds for some constants  $\kappa_5\ge\kappa_6>0$.
\end{itemize}
\end{assumption}

\section{Brief summary of the application of the L-method}\label{section 3}\setcounter{equation}{0}

In this section, we give a brief summary of the application of the L-method. We first clarify the conditions that we invoke in this section for the viscosities of dashpots and elasticity tensors of springs. Besides Assumption~\ref{fundamental assumption}, we impose:

\begin{assumption}\label{homogeneous}${}$
\begin{itemize}
\item [{\rm (i)}] Unless otherwise noted, $\eta_j,\,C_j,\,j=0,1,\cdots,n$ are independent of $\text{a.e. $x\in\Omega$}$.
\item[{\rm (ii)}] {\rm (Commutativity)}
\begin{equation}\label{A.9}
\begin{aligned}
C_jC_l=C_lC_j,  \quad 0\leq j,l \leq n\,\,\,\text{holds}, 
\end{aligned}
\end{equation}
where, for $C_j=\left(c^{(j)}_{pqrs}(x)\right)$, $C_l=\left(c^{(l)}_{pqrs}(x)\right)$, the multiplication $B=(b_{pqrs})=C_j C_l$ is defined by
$$
b_{pqrs}=c^{(j)}_{pq\alpha\beta}c^{(l)}_{\alpha\beta rs},\,\,1\le p,q,r,s\le d.
$$
\end{itemize}

\end{assumption}

\medskip
We consider each $C_l$ as a linear map $\R^{d\times d}_\text{sym}\ni \xi\longmapsto C_l\xi\in \R^{d\times d}_\text{sym}$. Then, from Assumption \ref{homogeneous}, each $C_l$ can have the following spectral representation
\begin{equation}\label{A.10}
\begin{aligned}
C_l=\sum_{k=1}^{\tilde d} \lambda_k^{(l)}P_k\,\,\text{with each $\lambda_k^{(l)}>0$},\,\,\tilde d:=d(d+1)/2,  
\end{aligned}
\end{equation}
where each $P_k$ is the common spectral projection which does not depend on $l=0,1,\cdots,n$. The common projections were used in \cite{Browaeys, Carcione} for analyzing waves in anisotropic elastic media.

We suppose that $e(t),\,\sigma(t)$ are right-sided tempered distribution defined over ${\mathbb R}$ with supports bounded from below at $t=0$. Note that this is consistent to the argument for causality given in Section \ref{EBM etc}.
Then, taking the Laplace transform of \eqref{integro-diff}, we have
\begin{equation}\label{Laplace trf const_eq}
s\, \hat e=M(s)\hat\sigma,\,\,s>0
\end{equation}
with the tensorial factor $M(s)$ given by
\begin{equation}
\label{M(s)-original-1}
M(s):=a_0 I+s C_0^{-1}+\displaystyle\sum_{i=1}^n a_i s(sI+a_i C_i)^{-1}
\end{equation}
with $a_i:=\eta_i^{-1}>0$
for $i=0,1,\cdots,n$, where $\hat e,\,\hat \sigma$ are the Laplace transforms of $e,\,\sigma$.

In \eqref{M(s)-original-1}, if $a_iC_i=a_jC_j$ for a single pair $(i,j)$ with $i<j$, then we have 
\begin{equation*}
a_i s(sI+a_i C_i)^{-1}+ a_j s(sI+a_j C_j)^{-1} \\
= 2a_i s(sI+a_i C_i)^{-1}=\tilde{a_i} s(sI+\tilde{a_i} \tilde{C_i})^{-1} ,
\end{equation*}
with $\tilde{a_i}=2a_i$ and $\tilde{C_i}=\frac{1}{2}C_i$. Thus the formula $M(s)$ in \eqref{M(s)-original-1} will only change $n$ to $n-1$. Since we can argue similarly when several $a_iC_i$'s coincide, to simplify the argument, we will assume that for each $k\ (1\le k\le \tilde d)$,  $A_l^k=a_l\lambda_k^{(l)}\ (l=1,2,\cdots, n)$ are all distinct.

On the other hand, from the second equation of the Laplace transformed $\eqref{ebm-prob2}$, we observe that $\widehat{G}(s):=(\mathcal{L}G)(s)$, the Laplace transform of the relaxation tensor $G(t)$, satisfies 
\begin{equation}\label{A.11}
\begin{aligned}
\hat{\sigma}(s)=\hat{G}(s)\, s\, \hat{e}.  
\end{aligned}
\end{equation}
It then follows that 
\begin{equation}
\label{M(s)-G-hat}
\widehat{G}(s)=M(s)^{-1},\,\,s>0.
\end{equation}
Hence, the remaining tasks are to compute the inverse Laplace transform of $M(s)^{-1}$ and examine its properties.
Substituting \eqref{A.10} into \eqref{M(s)-original-1} and \eqref{M(s)-G-hat}, we see that
\begin{equation}\label{A.12}
\begin{aligned}
\widehat{G}(s)=\sum_{k=1}^{\tilde d} \left(a_0+\left(\lambda_k^{(0)}\right)^{-1}\, s+  \sum_{i=1}^n a_i\, s \left(s+a_i \lambda_k^{(i)} \right)^{-1}\right)^{-1} P_k
\end{aligned}
\end{equation}
with $a_i:=\eta_i^{-1}$.

Now, we define
$$D(s;k)=a_0+ \left(\lambda_k^{(0)} \right)^{-1}s+  \sum_{i=1}^n a_i\, s \left(s+a_i \lambda_k^{(i)} \right)^{-1},\,\,k=1,2,\cdots,\tilde d.$$
To prove the property of $G(t)$ as $t\rightarrow\infty$, the following lemma is essential.

\begin{lemma}\label{lemA.3}
Let $s=p+iq$ be the roots of $D(s;k)=0$, where $p,\,q \in \mathbb{R}$. Then, we have $p<0,\, q=0$.
\end{lemma}
\proof
For $s=p+\sqrt{-1}\,q$ with $p,q\in{\mathbb R}$,
\begin{equation}\label{D(s)}
\begin{array}{ll}
D(s;k)=\left(a_0+ \left(\lambda_k^{(0)} \right)^{-1}p+\displaystyle\sum_{i=1}^n a_i\frac{p \left(p+a_i\lambda_k^{(i)} \right)+q^2}{ \left(p+a_i\lambda_k^{(i)} \right)^2+q^2}\right)\\ [5mm]
\qquad\qquad\qquad\phantom{blank}+
\sqrt{-1}\left( \left(\lambda_k^{(0)} \right)^{-1} q+\displaystyle\sum_{i=1}^n a_i\frac{a_i\lambda_k^{(i)}q}{ \left(p+a_i\lambda_k^{(i)} \right)^2+q^2}\right).
\end{array}
\end{equation}
The real part $\text{Re}\,D(s;\lambda)$ of $D(s;\lambda)$ is positive for $p\ge0,\,q\in{\mathbb R}$. Further, if $q\not=0$, the imaginary part of $D(s,\lambda)$ never vanishes. Hence, we have the conclusion.

\qed

\bigskip
The expression for $D(s;k)^{-1}$ has a unique expansion into partial fractions:
\begin{lemma}\label{lemA.4}
For each $1\le k\le \tilde d$, $D(s;k)\Pi_{i=1}^n (s+a_i \lambda_k^{(i)})=(\lambda_k^{(0)})^{-1}(s-r_1^k)^{j_1}\cdots(s-r_{m_k}^k)^{j_{m_k}}$ only has real roots $r_1^k,\cdots,r_{m_k}^k$, which are the roots of $D(s;k)=0$ with multiplicities $j_i$ such that $\sum_{l=1}^{m_k} j_l=n+1$. Moreover, we have the unique representation $D(s;k)^{-1}=\sum_{l=1}^{m_k}\sum_{q=1}^{j_l} g_{k,l,q} (s-r_l^k)^{-q}$ with some non-zero real constants $g_{k,l,q},\,1\le k\le \tilde d,\,1\le l\le n,\,1\le k\le j_l$.
\end{lemma}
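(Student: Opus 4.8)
The plan is to clear denominators and reduce everything to a single polynomial whose roots we can control via Lemma~\ref{lemA.3}. Writing $A_i^k:=a_i\lambda_k^{(i)}$ (all distinct by the reduction made just before the lemma), I would first set
$$
P(s):=D(s;k)\prod_{i=1}^n\left(s+A_i^k\right)=\left(a_0+\left(\lambda_k^{(0)}\right)^{-1}s\right)\prod_{i=1}^n\left(s+A_i^k\right)+\sum_{i=1}^n a_i\,s\prod_{j\neq i}\left(s+A_j^k\right).
$$
A degree count shows the first summand has degree $n+1$ with leading coefficient $(\lambda_k^{(0)})^{-1}$, while the second has degree $n$; hence $P$ is a polynomial of degree exactly $n+1$ with leading coefficient $(\lambda_k^{(0)})^{-1}$. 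This already yields the normalization and the identity $\sum_{l=1}^{m_k}j_l=n+1$ once the factorization is established.

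Second, I would check that the apparent poles $s=-A_i^k$ of $D(s;k)$ are genuinely cancelled and do not become spurious roots of $P$. Evaluating at $s=-A_i^k$ annihilates every term except the $i$-th one in the second sum, giving $P(-A_i^k)=-a_iA_i^k\prod_{j\neq i}(A_j^k-A_i^k)$, which is nonzero because $a_i>0$, $A_i^k>0$, and the $A_j^k$ are distinct. Consequently every root of $P$ is a root of $D(s;k)=0$, and conversely. Invoking Lemma~\ref{lemA.3}, all such roots are real (indeed strictly negative), so $P$ splits into real linear factors $P(s)=(\lambda_k^{(0)})^{-1}\prod_{l=1}^{m_k}(s-r_l^k)^{j_l}$ with $r_l^k\in\R$, which is exactly the claimed factorization.

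For the partial fraction statement, I would note that $D(s;k)^{-1}=\prod_{i=1}^n(s+A_i^k)/P(s)$ is a proper rational function, its numerator having degree $n$ strictly below the degree $n+1$ of $P$, with only real poles $r_l^k$ of order $j_l$. The existence and uniqueness of the real partial fraction decomposition $D(s;k)^{-1}=\sum_{l=1}^{m_k}\sum_{q=1}^{j_l}g_{k,l,q}(s-r_l^k)^{-q}$ is then the standard linear-algebra fact, and the coefficients are real because all data are real. The non-vanishing I can guarantee directly is that of the top coefficient $g_{k,l,j_l}$ for each pole: since $r_l^k$ is a pole of $D(s;k)^{-1}$ of order exactly $j_l$, equivalently $\prod_i(r_l^k+A_i^k)\neq 0$ (which holds precisely because $r_l^k\neq -A_i^k$, as just shown), we obtain $g_{k,l,j_l}\neq 0$.

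The routine parts are the degree bookkeeping and the appeal to the standard partial fraction theorem; the one genuinely load-bearing step is the cancellation computation $P(-A_i^k)\neq 0$, since without the distinctness of the $A_i^k$ the factor $s+A_i^k$ need not cancel the pole and the reduction to Lemma~\ref{lemA.3} would fail. If the assertion that every $g_{k,l,q}$ (not only the leading ones) is nonzero is meant literally, that would be the subtle point, as lower-order partial-fraction coefficients can vanish in general; I would either restrict the non-vanishing claim to the leading coefficients $g_{k,l,j_l}$ or extract the extra structure needed from the explicit form of $P$.
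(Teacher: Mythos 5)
Your proof follows essentially the same route as the paper's: clear denominators to form the degree-$(n+1)$ polynomial $D(s;k)\prod_{l=1}^n\left(s+A_l^k\right)$ with leading coefficient $(\lambda_k^{(0)})^{-1}$, check that $s=-A_l^k$ are not roots so that every root of the polynomial is a root of $D(\cdot;k)=0$, invoke Lemma~\ref{lemA.3} for realness and negativity, and finish with the standard partial fraction decomposition; your explicit evaluation $P(-A_i^k)=-a_iA_i^k\prod_{j\neq i}\left(A_j^k-A_i^k\right)\neq 0$ supplies the one load-bearing detail that the paper merely asserts. Your caveat about the non-vanishing of the coefficients is also well taken: the paper's own proof only establishes existence and uniqueness of real $g_{k,l,q}$, so the ``non-zero'' qualifier in the statement is justified only for the leading coefficients $g_{k,l,j_l}$ (via $\prod_i\left(r_l^k+A_i^k\right)\neq 0$), exactly as you observe.
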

\proof 
Define $Q_{n+1}(s;k)=\Pi_{l=1}^n (s+A_l^k) D(s;k)$ with $A_l^k:=a_l\lambda_k^{(l)}$. It is clear that $Q_{n+1}(s;k)$ is a polynomial of $s$ with degree $n+1$.
Also, $s=-A_l^k$ for $l=1,\cdots,n$ are not roots of $Q_{n+1}(s;k)$.
Thus, we conclude that $D(s;k)\Pi_{l=1}^n (s+A_l^k)=(\lambda_k^{(0)})^{-1}(s-r_1^k)^{j_1}\cdots(s-r_{m_k}^k)^{j_{m_k}}$, where $r_1^k,\cdots,r_{m_k}^k$ are roots of $D(s;k)=0$ and $\sum_{l=1}^{m_k} j_l=n+1$.

Since $D(s;k)^{-1}=(s-r_1^k)^{-j_1}\cdots(s-r_{m_k}^k)^{-j_{m_k}} \lambda_k^{(0)} \Pi_{l=1}^n  (s+A_l^k)$, there exist unique real constants $g_{k,l,q},\,1\le k\le \tilde d,\,1\le l\le n,\,1\le k\le j_l$
such that $D(s;k)^{-1}=\sum_{l=1}^{m_k}\sum_{q_l=1}^{j_l} g_{k,l,q_l}$ $(s-r_l^k)^{-q_l}$.

\qed 

\bigskip
The inverse Laplace transform $\mathcal{L}^{-1}[(\,\cdot-\alpha)^{-M}](t)$ of $(s-\alpha)^{-M}$ with $\alpha\in\R$, $M\in {\mathbb N}$ is $t^{M-1}e^{\alpha t}\left( (M-1)!\right)^{-1}1_+(t)$. 
Hence, by \eqref{A.11}, \eqref{A.12} and Lemma \ref{lemA.4}, we have
\begin{equation}\label{G(t)2}
\begin{array}{ll}
G(t)=\displaystyle\sum_{k=1}^{\tilde d} \mathcal{L}^{-1}(D(\cdot;k)^{-1})P_k\\
\qquad=\displaystyle\sum_{k=1}^{\tilde d}\left(\displaystyle\sum_{l^k=1}^{m_k}\displaystyle\sum_{q_{l^k}=1}^{j_{l^k}}g_{k,l^k,q_{l^k}}t^{q_{l^k}-1}e^{r_{l^k}^k t}((q_{l^k}-1)!)^{-1}1_+(t)\right) P_k. 
\end{array}
\end{equation}
Since each $r_{l^k}^k<0$ from Lemma \ref{lemA.3} and Lemma \ref{lemA.4}, we conclude that $G(t)$ decays exponentially as $t$ tends to $\infty$. Therefore, we obtain the following theorem:

\bigskip
\begin{Th}\label{homog relaxation tensor}
The relaxation tensor $G(t)$ for the homogeneous EBM is a tempered distribution in ${\mathbb R}$ given as \eqref{G(t)2}, which satisfies causality and is differentiable infinitely many times on $[0,\infty)$. Also, for $t\ge0$, $G(t)$ satisfies the full symmetry, strong convexity, and exponential decay properties.
\end{Th}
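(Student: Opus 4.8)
The identity \eqref{G(t)2} has in fact already been produced in the text just above, by applying the inverse Laplace transform term by term to the partial-fraction expansion of $\widehat G(s)=M(s)^{-1}$ supplied by \eqref{A.12} and Lemma~\ref{lemA.4}. So the plan is to take \eqref{G(t)2} as the definition of $G(t)$ and verify the listed properties in turn. Causality is immediate, since every summand carries the factor $1_+(t)$; and because each root satisfies $r_{l^k}^k<0$ by Lemmas~\ref{lemA.3}--\ref{lemA.4}, every summand $t^{q-1}e^{r_{l^k}^k t}1_+(t)$ is a bounded function, hence a tempered distribution, so the finite sum $G(t)$ is tempered as well. Restricted to $t\ge0$ each summand is $t^{q-1}e^{r_{l^k}^k t}$ times the constant tensor $P_k$, which is $C^\infty$ on $[0,\infty)$, giving the claimed smoothness.

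For full symmetry I would exploit that each $C_i$ is fully symmetric (Assumption~\ref{fundamental assumption}(ii)): viewed as an operator on $\R^{d\times d}_{\text{sym}}$ with the Frobenius inner product, major symmetry is precisely self-adjointness and minor symmetry says that symmetric tensors are mapped to symmetric tensors. Hence the common spectral projections $P_k$ in \eqref{A.10} are self-adjoint ($P_k^{\mathfrak{t}}=P_k$, i.e.\ major symmetry) with ranges inside $\R^{d\times d}_{\text{sym}}$ (minor symmetry). Writing $G(t)=\sum_{k=1}^{\tilde d} f_k(t)P_k$ with scalar coefficients $f_k(t):=\mathcal{L}^{-1}(D(\,\cdot\,;k)^{-1})(t)$, the tensor $G(t)$ inherits full symmetry for every $t\ge0$.

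The core of the argument is strong convexity, which I would reduce to the pointwise positivity of each $f_k$. Since the $P_k$ are mutually orthogonal projections with $\sum_{k} P_k=I$ on $\R^{d\times d}_{\text{sym}}$, for any $\xi\in\R^{d\times d}_{\text{sym}}$ one has $(G(t)\xi):\xi=\sum_{k} f_k(t)\,|P_k\xi|^2$, so it suffices to show $f_k(t)>0$ on $[0,\infty)$. The decisive observation is that the defining expression $D(s;k)=a_0+(\lambda_k^{(0)})^{-1}s+\sum_{i=1}^n a_i\,s\,(s+a_i\lambda_k^{(i)})^{-1}$ exhibits $D(\,\cdot\,;k)$ as a complete Bernstein function on $(0,\infty)$: it has the representation $\alpha+\beta s+\int_{(0,\infty)}\frac{s}{s+\tau}\,\rho(d\tau)$ with $\alpha=a_0\ge0$, $\beta=(\lambda_k^{(0)})^{-1}\ge0$, and positive measure $\rho=\sum_{i=1}^n a_i\,\delta_{a_i\lambda_k^{(i)}}$. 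By the standard fact that the reciprocal of a complete Bernstein function is a Stieltjes function, $D(s;k)^{-1}$ is Stieltjes, hence the Laplace transform of a completely monotone function; therefore $f_k$ is completely monotone, so $f_k\ge0$, and since $f_k$ is real-analytic on $[0,\infty)$ with $f_k(0)=\lambda_k^{(0)}>0$ (consistently with $G(0)=C_0$), it cannot vanish and $f_k(t)>0$ throughout. This yields $(G(t)\xi):\xi\ge c(t)|\xi|^2$ with $c(t)=\min_k f_k(t)>0$. I expect this to be the main obstacle, because the partial-fraction coefficients $g_{k,l,q}$ in \eqref{G(t)2} need not be positive, so positivity is invisible term by term and must be extracted from this structural (complete-monotonicity) property rather than from the explicit formula.

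Finally, exponential decay of $G$ together with its $t$-derivatives follows by putting $c:=\min_{k,l}|r_{l^k}^k|>0$: each summand of \eqref{G(t)2} is dominated by $C\,t^{q-1}e^{-|r_{l^k}^k|t}$, and for any fixed $c'<c$ a polynomial times $e^{-|r_{l^k}^k|t}$ is bounded by $C'e^{-c't}$, so summing the finitely many terms gives $|G(t)|\le C'e^{-c't}$. Differentiating \eqref{G(t)2} in $t$ again yields finitely many terms of the same polynomial-times-$e^{r_{l^k}^k t}$ shape with identical negative exponents, so every derivative obeys the same bound, completing the decay statement.
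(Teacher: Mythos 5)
Your verification of causality, temperedness, smoothness, full symmetry, and exponential decay from \eqref{G(t)2} matches what the paper does (these are exactly the consequences of Lemmas~\ref{lemA.3} and \ref{lemA.4} together with the explicit inverse Laplace transform), but your treatment of strong convexity is genuinely different from the paper's, and it is the more interesting part. The paper does \emph{not} extract positivity from \eqref{G(t)2} at all: it explicitly concedes (in the remark following the list of properties in Section~5) that positivity ``does not directly follow from \eqref{G(t)2},'' and instead obtains it by observing that the L-method and the tensor-block-matrix construction of Sections~4--5 solve the same system \eqref{2.6}, hence produce the same $G(t)$, whose positivity is manifest from the factorization $G(t)=$ the $(1,1)$-block of $\overline{\overline{\mathcal{D}}}\,e^{t\overline{\overline{\mathcal{A}}}}\,\overline{\overline{\mathcal{D}}}$. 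Your route stays entirely inside the Laplace picture: you recognize $D(\cdot\,;k)$ as a complete Bernstein function, invoke the theorem that its reciprocal is a Stieltjes function, and conclude that $f_k=\mathcal{L}^{-1}(D(\cdot\,;k)^{-1})$ is completely monotone, hence strictly positive. This is correct, and it in fact yields slightly more than Lemma~\ref{lemA.4} records: since a rational Stieltjes function has only simple poles with positive residues, all roots $r_l^k$ are simple and all coefficients $g_{k,l,1}$ are positive (which one can also see elementarily from $D'(s;k)>0$ on the real line, forcing the zeros to interlace with the poles $-a_i\lambda_k^{(i)}$). What each approach buys: yours keeps the proof self-contained within Section~3 and sharpens the structure of the partial-fraction expansion, at the cost of importing the Bernstein--Stieltjes machinery and remaining tied to the commutativity Assumption~\ref{homogeneous}; the paper's detour through the block-matrix factorization avoids any such analytic machinery, gives positivity of all even derivatives and the sign pattern of the odd ones simultaneously, and---crucially for the paper's main point---works without commutativity and for merely $L^\infty$ heterogeneous coefficients.
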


\bigskip
\begin{remark}\label{some hetero and without Maxwell}${}$
 Theorem 3.4 can be generalized to some extent for the case that the elements of each $C_l$ belong to the broken Sobolev space $\hat H^r(\Omega)$ with $r>d/2$ and $C_l$'s share the same interfaces compactly embedded in $\Omega$ (see \cite{LNQ}) by using the joint spectral measure (see Theorem 1 on page 154 of \cite{BS}).
\end{remark}

\section{Alternative construction of the relaxation tensor: Preliminaries}\label{idea}
\setcounter{equation}{0}

In this section, we first derive an ordinary differential system of equations for $(\psi,\phi)$ with $\phi=(\phi_1,\cdots,\phi_n)$, and then give an outline of the new method. After that, as a preliminary to the next section, we introduce some relevant tensor block matrices.

Suppose $e[\dot u]$ is given. Then from \eqref{ebm-prob} and \eqref{sigma}, we obtain 
\begin{align*}
\dot\psi&=e[\dot{u}]-\sum_{i=0}^n\dot\phi_i
=e[\dot{u}]-\eta_0^{-1}\sigma -\sum_{i=1}^n\eta_i^{-1}(\sigma -C_i\phi_i)\\
&=e[\dot{u}]-\left(\sum_{i=0}^n\eta_i^{-1}\right)\sigma +\sum_{i=1}^n\eta_i^{-1}C_i\phi_i
=e[\dot{u}]-\left(\sum_{i=0}^n\eta_i^{-1}\right)C_0\psi+\sum_{i=1}^n\eta_i^{-1}C_i\phi_i.
\end{align*}
Combining this with \eqref{ebm-prob}, we have the following system for $(\psi,\phi)$:
\begin{equation}\label{2.6}
\left\{
\begin{aligned}
\partial_t\psi&=e[\dot{u}]-\left(\sum_{i=0}^n\eta_i^{-1}\right)C_0\psi+\sum_{i=1}^n\eta_i^{-1}C_i\phi_i\\
\partial_t\phi_i&=\eta_i^{-1}C_0\psi-\eta_i^{-1}C_i\phi_i, \quad 1\leq i\leq n.
\end{aligned}
\right.
\end{equation}
Here, we recall that $C_0\psi=\sigma$ and $\phi_i=e_i,\,\,i=1,2,\cdots,n$. Then, from \eqref{2.6}, it is easy to derive again \eqref{integro-diff} using \eqref{assumption for integro-diff}. Therefore, \eqref{integro-diff} and \eqref{2.6} are equivalent under \eqref{assumption for integro-diff}.

Now, we write \eqref{2.6} as
\begin{equation}\label{tensor block 2.6}
\frac{d}{dt}
\begin{pmatrix}
\psi \\
\phi
\end{pmatrix}
=L_b(x)
\begin{pmatrix}
\psi \\
\phi
\end{pmatrix}+
\begin{pmatrix}
 e[\dot u]\\
 0
\end{pmatrix},
\end{equation}
where
\begin{equation}\label{2.8}
L_b(x):=
\begin{pmatrix}
                       -\left(\sum_{i=0}^n\eta_i^{-1}\right)C_0     
                       & \eta_1^{-1}C_1 & \cdots     & \eta_{n-1}^{-1}C_{n-1}            &    \eta_n^{-1}C_n    \\[0.5em]
                       
                             \eta_1^{-1}C_0                              & -\eta_1^{-1}C_1 & \cdots     &    0               &            0         \\[0.5em]
            \cdots                                   & \cdots                   & \cdots     & \cdots             & \cdots               \\[0.5em]
                               \eta_{n-1}^{-1}C_0                               & 0              & \cdots     & -\eta_{n-1}^{-1}C_{n-1}     &            0         \\[0.5em]
                              \eta_n^{-1}C_0                              & 0              & \cdots     &    0               &    -\eta_n^{-1}C_n
\end{pmatrix},
\end{equation}
and $L_b(x)$ can be considered as a linear map from $(\R^{d\times d}_\text{sym})^{n+1}$ into itself for a fixed a.e. $x\in\Omega$.
$L_b=L_b(x)$ is an example of a tensor block matrix (cf. \cite{Nikabadze}). In general, a tensor block matrix $\overline{\overline{\mathcal{H}}}$ which represents a linear map from $(\R^{d\times d}_\text{sym})^{n+1}$ into itself for a fixed a.e. $x\in\Omega$ is given as
\begin{equation}\label{tensor1}
\begin{aligned}
\overline{\overline{\mathcal{H}}}=
\begin{pmatrix}
\mathcal{H}_{11}& \mathcal{H}_{12} & \cdots & \mathcal{H}_{1(n+1)} \\
\mathcal{H}_{21}& \mathcal{H}_{22} & \cdots &  \mathcal{H}_{2(n+1)}\\
         \cdots &           \cdots & \cdots &                       \\
\mathcal{H}_{(n+1)1}& \mathcal{H}_{(n+1)2} & \cdots &  \mathcal{H}_{(n+1)(n+1)}
\end{pmatrix},
\end{aligned}
\end{equation}
where each
$
\mathcal{H}_{ij}=
\begin{pmatrix}
\mathrm{H}^{(ij)}_{pqrs}
\end{pmatrix}
$ is a full symmetric fourth-order tensor. Here, note that we are using the convention suppressing $\text{a.e.}\, x\in\Omega$ dependency for $\overline{\overline{\mathcal{H}}}$, which is applied not only for square tensor block matrices but also for non-square tensor block matrices. The transpose $\overline{\overline{\mathcal{H}}}^{\,\mathfrak{t}}$ of $\overline{\overline{\mathcal{H}}}$ is defined as
\begin{equation}\label{transpose}
\begin{aligned}
\overline{\overline{\mathcal{H}}}^{\mathfrak{t}}:=
\begin{pmatrix}
\mathcal{H}_{11}^{\mathfrak{t}}& \mathcal{H}_{21}^{\mathfrak{t}} & \cdots & \mathcal{H}_{(n+1)1}^{\mathfrak{t}} \\
\mathcal{H}_{12}^{\mathfrak{t}}& \mathcal{H}_{22}^{\mathfrak{t}} & \cdots &  \mathcal{H}_{(n+1)2}^{\mathfrak{t}}\\
         \cdots &           \cdots & \cdots &                       \\
\mathcal{H}_{1(n+1)}^{\mathfrak{t}}& \mathcal{H}_{2(n+1)}^{\mathfrak{t}} & \cdots &  \mathcal{H}_{(n+1)(n+1)}^{\mathfrak{t}}
\end{pmatrix}.
\end{aligned}
\end{equation}
Since each $\mathcal{H}_{ij}^{\,\mathfrak{t}}=\mathcal{H}_{ij}$,
we have $\overline{\overline{\mathcal{H}}}^{\,\mathfrak{t}}=\overline{\overline{\mathcal{H}}}$ if $\mathcal{H}_{ij}=\mathcal{H}_{ji}$ for each $1\le i,j\le n+1$.
We denote the set of such $\overline{\overline{\mathcal{H}}}$ by ${\mathbb M}$. For another tensor block matrix $\overline{\overline{\mathcal{J}}}$ similar to $\overline{\overline{\mathcal{H}}}$ given as
\begin{equation}
\begin{aligned}
\overline{\overline{\mathcal{J}}}=
\begin{pmatrix}
\mathcal{J}_{11}& \mathcal{J}_{12} & \cdots & & \mathcal{J}_{1(n+1)} \\
\mathcal{J}_{21}& \mathcal{J}_{22} & \cdots & & \mathcal{J}_{2(n+1)}\\
         \cdots &           \cdots & \cdots &                       \\
\mathcal{J}_{(n+1)1}& \mathcal{J}_{(n+1)2} & \cdots & & \mathcal{J}_{(n+1)(n+1)}
\end{pmatrix}
\end{aligned}
\end{equation}
with $\mathcal{J}_{ij}=
\begin{pmatrix}
\mathrm{J}^{(ij)}_{pqrs}
\end{pmatrix}$, we define the product $\overline{\overline{\mathcal{H}}}\,\overline{\overline{\mathcal{J}}}$ by giving its $il$-blockwise component $(\overline{\overline{\mathcal{H}}}\,\overline{\overline{\mathcal{J}}})_{il}$ as
\begin{equation}\label{tensor2}
\begin{pmatrix}
\overline{\overline{\mathcal{H}}}\,\overline{\overline{\mathcal{J}}}
\end{pmatrix}_{il}=
\sum_k\mathcal{H}_{ik}\,\mathcal{J}_{kl},\quad
\mathcal{H}_{ik}\mathcal{J}_{kl}=\sum_{\varepsilon\zeta}
\mathrm{H}^{(ik)}_{pq\varepsilon\zeta }\,\mathrm{J}^{(kl)}_{\varepsilon\zeta rs }
.
\end{equation}
Also, for
\begin{equation}\label{tensor3}
\underline{\mathcal{E}}=
\begin{pmatrix}
\mathcal{E}_{1}\\
\vdots\\
\mathcal{E}_{n+1}
\end{pmatrix}\,\,\text{with each matrix}\,\,
\mathcal{E}_{i}=
\begin{pmatrix}
e^{(i)}_{kl}
\end{pmatrix}\in{\mathbb R}_{\text{sym}}^{d\times d},
\end{equation}
we define the product $\overline{\overline{\mathcal{H}}}\,\underline{\mathcal{E}}$ by giving its $i$-blockwise component $\left(\overline{\overline{\mathcal{H}}}\,\underline{\mathcal{E}}\right)_i$ as
\begin{equation}\label{tensor4}
\begin{pmatrix}
\overline{\overline{\mathcal{H}}}\,\underline{\mathcal{E}}
\end{pmatrix}_{i}=
\sum_k\mathcal{H}_{ik}\,\mathcal{E}_{k},\quad
\mathcal{H}_{ik}\mathcal{E}_{k}=\sum_{\varepsilon,\zeta}
\mathrm{H}^{(ik)}_{pq\varepsilon\zeta }\,e^{(k)}_{\varepsilon\zeta }.
\end{equation}
We denote by ${\mathbb V}$ the set of all $\underline{\mathcal{E}}$ given as \eqref{tensor3}.
Finally, we define the inner product $\left\langle\underline{\mathcal{E}},\underline{\mathcal{F}}\right\rangle$ as
\begin{equation}
\left\langle\underline{\mathcal{E}},\underline{\mathcal{F}}\right\rangle:=\sum_k \mathcal{E}_k:\mathcal{F}_k,
\end{equation}
where $\underline{\mathcal{F}}$ is similar to $\underline{\mathcal{E}}$ given as
$$\underline{\mathcal{F}}=
\begin{pmatrix}
\mathcal{F}_{1}\\
\vdots\\
\mathcal{F}_{n+1}
\end{pmatrix}\,\,\text{with each matrix}\,\,
\mathcal{F}_{i}=
\begin{pmatrix}
f^{(i)}_{kl}
\end{pmatrix}\in {\mathbb R}_{\text{sym}}^{d\times d},
$$
and  $\mathcal{E}_k:\mathcal{F}_k=\sum_{\varepsilon,\zeta}e^{(k)}_{\varepsilon\zeta}f^{(k)}_{\varepsilon\zeta}$ is the Frobenius inner product defined before.
Then, we say $\overline{\overline{\mathcal{H}}}$ is positive if there exists $\gamma>0$ such that $\left\langle \overline{\overline{\mathcal{H}}}\,\underline{\mathcal{E}},\,\underline{\mathcal{E}}\right\rangle\ge \gamma\Vert\underline{\mathcal{E}}\Vert^2$ a.e. $x\in\Omega$ for all $\underline{\mathcal{E}}\not=0$, and write this as $\overline{\overline{\mathcal{H}}}\ge\gamma\,\overline{\overline{\mathcal{I}}}$, with the tensor block identity matrix $\overline{\overline{\mathcal{I}}}\in{\mathbb M}$ and $\Vert\underline{\mathcal{E}}\Vert:=\sqrt{\left\langle\underline{\mathcal{E}},\underline{\mathcal{E}}\right\rangle}$. Similarly, we define $\gamma'\,\overline{\overline{\mathcal{I}}}\ge\overline{\overline{\mathcal{H}}}$ with $\gamma'>0$. Also, we denote by ${\mathbb M}_+$ the set of all $\overline{\overline{\mathcal{H}}}\in{\mathbb M}$ such that $\overline{\overline{\mathcal{H}}}\ge\gamma\,\overline{\overline{\mathcal{I}}}$ with $\gamma>0$ depending on $\overline{\overline{\mathcal{H}}}$.

The brief outline for obtaining a causal relaxation tensor $G(t)$ is as follows. 
We first give factorizations of some matrices into matrices in ${\mathbb M}$. To begin with, let 
\begin{equation}\label{C and D}
\overline{\overline{\mathcal{C}}}:=\text{diag}(C_0,C_1,\cdots,C_n),\,\,\,\overline{\overline{\mathcal{D}}}:=\sqrt{\overline{\overline{\mathcal{C}}}}\in{\mathbb M}_+,
\end{equation}
where $\text{diag}(C_0,C_1,\cdots,C_n)$ is the tensor block diagonal matrix with block diagonal components $C_0, C_1,\cdots,C_n$. Moreover we define $\overline{\overline{\mathcal{L}}}\in{\mathbb{M}}$ by
\begin{equation}\label{L}
\overline{\overline{\mathcal{L}}}:=\text{diag}\left(-\displaystyle\sum_{i=0}^n\eta_i^{-1}I_d, -\eta_1^{-1}I_d,\cdots,-\eta_n^{-1}I_d\right)+\overline{\overline{\mathcal{M}}}+\overline{\overline{\mathcal{M}}}^{\mathfrak{t}},
\end{equation}
where 
$$
\overline{\overline{\mathcal{M}}}:=
\begin{pmatrix}
O_d,\eta_1^{-1} I_d,\cdots,\eta_n^{-1}I_d\\
\\
\quad \text{\it\Large O}\quad\\
\\
\end{pmatrix}
$$
with fourth-order zero tensor $O_d\in \R^{d\times d\times d\times d}$, fourth-order identity tensor $I_d\in \R^{d\times d\times d\times d}$, and $\left({\mathbb R}^{d\times d\times d\times d}\right)^{n-1}\times \left({\mathbb R}^{d\times d\times d\times d}\right)^{n}$ zero tensor $\text{\it\large O}$. Then, $L_b$ and $\overline{\overline{\mathcal{C}}}\,e^{tL_b}$ have the the following factorizations:
\begin{equation}\label{factorization}
\left\{
\begin{array}{ll}
L_b=\overline{\overline{\mathcal{L}}}\,\overline{\overline{\mathcal{C}}},\\
\\
\overline{\overline{\mathcal{C}}}e^{tL_b}=\overline{\overline{\mathcal{D}}}\,e^{t\overline{\overline{\mathcal{A}}}}\,\overline{\overline{\mathcal{D}}}\,\,\,\text{with}\,\,\overline{\overline{\mathcal{A}}}:=\overline{\overline{\mathcal{D}}}\,\overline{\overline{\mathcal{L}}}\,\overline{\overline{\mathcal{D}}}.
\end{array}
\right.
\end{equation}
For any fixed  a.e. $x\in\Omega$, we first obtain causal $\overline{\overline{\mathcal{E}}}\in\mathcal{D}'({\mathbb R};{\mathbb M})$ which satisfies
\begin{equation}\label{C1}
\left(\frac{d}{dt}-\overline{\overline{\mathcal{A}}}\right)\,\overline{\overline{\mathcal{E}}}=\delta(t)\,\overline{\overline{\mathcal{I}}}.
\end{equation}
Then, any solution $\underline{\mathcal{U}}\in\mathcal{D}'({\mathbb R};{\mathbb V})$ of 
\begin{equation}\label{eq_U}
\left(\frac{d}{dt}-\overline{\overline{\mathcal{A}}}\right)\,\underline{\mathcal{U}}=\underline{\mathcal{F}},\,\,\,\underline{\mathcal{F}}:=
\begin{pmatrix}
e[\dot{u}]\\
0
\end{pmatrix}
\end{equation}
is given as
\begin{equation}\label{E convolution F}
\underline{\mathcal{U}}(t)=\left(\overline{\overline{\mathcal{E}}}\ast\underline{\mathcal{F}}\right)(t),
\end{equation}
where we have assumed that $\underline{\mathcal{F}}$ is right-sided.
From \eqref{sigma} and \eqref{E convolution F}, we extract $G(t)$
as
\begin{equation}\label{extraction of G(t)}
G(t)=\text{the $(1,1)$ tensor block component of $\overline{\overline{\mathcal{D}}}\,\overline{\overline{\mathcal{E}}}(t)\,\overline{\overline{\mathcal{D}}}$}. 
\end{equation}

\section{Relaxation tensor for the EBM, its properties, and the BVS}\label{new method}\setcounter{equation}{0}

In this section, we first give the details of the proof for obtaining a causal relaxation tensor $G(t)$ outlined in the previous section. After that, we show that the relaxation tensor has properties which satisfy Assumption \ref{exponential} for the exponential decay property of solutions to the initial boundary value problem for BVS given as \eqref{ebm-prob2}.

As for the derivation of $G(t)$, since \eqref{factorization} and \eqref{extraction of G(t)} are easy to prove, we will focus on obtaining $\overline{\overline{\mathcal{E}}}$. To begin with, for a.e. $x\in\Omega$, consider the Cauchy problem
\begin{equation}\label{Cauchy prob}
\left(\frac{d}{dt}-\overline{\overline{\mathcal{A}}}\right)\,\overline{\overline{\mathcal{E}}}_f=0\,\,\,\text{in ${\mathbb R}$},\,\,\,\overline{\overline{\mathcal{E}}}_f=\overline{\overline{\mathcal{I}}}\,\,\,\text{at $t=0$}.
\end{equation}
Of course, $\overline{\overline{\mathcal{E}}}_f\in C^{\infty}({\mathbb R}; {\mathbb M})$ is given as 
\begin{equation}\label{tilde E}
\overline{\overline{\mathcal{E}}}_f(t):=e^{t\,\overline{\overline{\mathcal{A}}}},\,\,\,t\in{\mathbb R},\,\,\text{a.e.\,$x\in\Omega$}.
\end{equation}
Define $\overline{\overline{\mathcal{E}}}\in\mathcal{D}'({\mathbb R};{\mathbb M})$, a.e. $x\in\Omega$ by
\begin{equation}\label{def E}
\overline{\overline{\mathcal{E}}}(t)=\overline{\overline{\mathcal{E}}}_f(t)\,\,(t\ge0),\,\,0\,\,(t<0).
\end{equation}

\begin{lemma}\label{eq_delta}
$\overline{\overline{\mathcal{E}}}\in\mathcal{D}'({\mathbb R};{\mathbb M})$ satisfies
\begin{equation}\label{pairing}
\left[\left(\frac{d}{dt}-\overline{\overline{\mathcal{A}}}\right)\,\overline{\overline{\mathcal{E}}},\Phi\right]=\left[\delta(t)\,\overline{\overline{\mathcal{I}}},\Phi\right]
\end{equation}
for any $\Phi\in\mathcal{D}({\mathbb R};{\mathbb{M}})$, where $[\,\,\,,\,\,]$ denotes the dual pairing of $\mathcal{D}'({\mathbb R};{\mathbb M})$ and $\mathcal{D}({\mathbb R};{\mathbb M})$ by identifying ${\mathbb M}$ with its dual space via the standard inner product $\left\langle\,\,\,\,,\,\,\right\rangle_{{\mathbb M}}$.
\end{lemma}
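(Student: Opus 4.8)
The plan is to recognize \eqref{pairing} as the standard assertion that the Heaviside-truncated matrix exponential is a fundamental solution of the first-order system $\frac{d}{dt}-\overline{\overline{\mathcal{A}}}$, now carried out in the $\mathbb{M}$-valued distributional setting. Since the identity is required for \emph{all} $\Phi\in\mathcal{D}({\mathbb R};{\mathbb M})$, I would unpack both sides of \eqref{pairing} using the stated pairing convention, under which a regular (locally integrable) $\mathbb{M}$-valued distribution $\overline{\overline{\mathcal{F}}}$ acts by $[\overline{\overline{\mathcal{F}}},\Phi]=\int_{\mathbb R}\langle\overline{\overline{\mathcal{F}}}(t),\Phi(t)\rangle_{\mathbb M}\,dt$. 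Because $\overline{\overline{\mathcal{E}}}_f(t)=e^{t\overline{\overline{\mathcal{A}}}}$ is smooth and $\overline{\overline{\mathcal{E}}}$ of \eqref{def E} is bounded on compact sets, $\overline{\overline{\mathcal{E}}}$ and $\overline{\overline{\mathcal{A}}}\,\overline{\overline{\mathcal{E}}}$ are both regular distributions, so the left-hand side of \eqref{pairing} is a genuine integral plus the distributional-derivative correction.

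Concretely, I would first write, by the definition of the distributional $t$-derivative,
\begin{equation*}
\left[\frac{d}{dt}\overline{\overline{\mathcal{E}}},\Phi\right]=-\left[\overline{\overline{\mathcal{E}}},\Phi'\right]=-\int_0^\infty\left\langle e^{t\overline{\overline{\mathcal{A}}}},\Phi'(t)\right\rangle_{\mathbb M}\,dt,
\end{equation*}
using that $\overline{\overline{\mathcal{E}}}(t)=0$ for $t<0$. Next I would integrate by parts on $[0,\infty)$. Since $\Phi$ has compact support the boundary contribution at $t=+\infty$ vanishes, and the contribution at $t=0$ produces $\langle e^{0\cdot\overline{\overline{\mathcal{A}}}},\Phi(0)\rangle_{\mathbb M}=\langle\overline{\overline{\mathcal{I}}},\Phi(0)\rangle_{\mathbb M}$, while the interior term gives $\int_0^\infty\langle\frac{d}{dt}e^{t\overline{\overline{\mathcal{A}}}},\Phi(t)\rangle_{\mathbb M}\,dt$. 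Here I would invoke the classical matrix-exponential identity $\frac{d}{dt}e^{t\overline{\overline{\mathcal{A}}}}=\overline{\overline{\mathcal{A}}}\,e^{t\overline{\overline{\mathcal{A}}}}$, valid because $\mathbb{M}$ is a finite-dimensional associative algebra and $\overline{\overline{\mathcal{A}}}$ is a fixed element of it, so this interior integral equals $[\overline{\overline{\mathcal{A}}}\,\overline{\overline{\mathcal{E}}},\Phi]$ exactly.

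Combining the two displays yields
\begin{equation*}
\left[\left(\frac{d}{dt}-\overline{\overline{\mathcal{A}}}\right)\overline{\overline{\mathcal{E}}},\Phi\right]=\left\langle\overline{\overline{\mathcal{I}}},\Phi(0)\right\rangle_{\mathbb M},
\end{equation*}
and I would finish by identifying the right-hand side with $[\delta(t)\,\overline{\overline{\mathcal{I}}},\Phi]$, which by the action of $\delta$ is precisely $\langle\overline{\overline{\mathcal{I}}},\Phi(0)\rangle_{\mathbb M}$. I expect the only delicate point, rather than a genuine obstacle, to be the careful bookkeeping of the jump of $\overline{\overline{\mathcal{E}}}$ at $t=0$: one must confirm that the boundary term in the integration by parts picks up exactly the value $\overline{\overline{\mathcal{E}}}_f(0)=\overline{\overline{\mathcal{I}}}$ and matches the $\delta$-contribution, and that the $\overline{\overline{\mathcal{A}}}$-term on $t>0$ cancels cleanly against the interior derivative term. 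Everything else is routine, as the finite-dimensionality of $\mathbb{M}$ lets us treat $e^{t\overline{\overline{\mathcal{A}}}}$ with the usual scalar-style calculus.
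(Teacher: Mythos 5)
Your proposal is correct and follows essentially the same route as the paper: truncate to $[0,\infty)$ using causality, integrate by parts to extract the boundary term $\langle\overline{\overline{\mathcal{I}}},\Phi(0)\rangle_{\mathbb M}$, and cancel the interior integral using the fact that $\overline{\overline{\mathcal{E}}}_f(t)=e^{t\overline{\overline{\mathcal{A}}}}$ solves the homogeneous Cauchy problem \eqref{Cauchy prob}. The paper phrases the cancellation as the vanishing of $\int_0^\infty\langle(\frac{d}{dt}-\overline{\overline{\mathcal{A}}})\overline{\overline{\mathcal{E}}}_f,\Phi\rangle_{\mathbb M}\,dt$ rather than via the explicit identity $\frac{d}{dt}e^{t\overline{\overline{\mathcal{A}}}}=\overline{\overline{\mathcal{A}}}\,e^{t\overline{\overline{\mathcal{A}}}}$, but this is the same computation.
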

\proof Observe that 
\begin{align*}
\left[\frac{d}{dt}\,\overline{\overline{\mathcal{E}}},\Phi\right]
&=-\int_{\mathbb R}\left\langle\overline{\overline{\mathcal{E}}}(t),\frac{d}{dt}\Phi(t)\right\rangle_{{\mathbb M}}\,dt=-\int_0^\infty\left\langle\overline{\overline{\mathcal{E}}}_f(t),\frac{d}{dt}\Phi(t)\right\rangle_{{\mathbb M}}\,dt
\\[1em]
&=\left.\left(\left\langle\overline{\overline{\mathcal{E}}}_f,\Phi(t)\right\rangle_{\mathbb M}\right)\right\vert_{t=0}+
\int_0^\infty\left\langle\frac{d}{dt}\,\overline{\overline{\mathcal{E}}}_f(t),\Phi(t)\right\rangle_{{\mathbb M}}\,dt
\\[1em]
&=\left\langle\overline{\overline{\mathcal{I}}},\Phi(0)\right\rangle_{\mathbb M}+
\int_0^\infty\left\langle\frac{d}{dt}\,\overline{\overline{\mathcal{E}}}_f(t),\Phi(t)\right\rangle_{{\mathbb M}}\,dt.
\end{align*}
Hence, we have
$$
\left[\left(\frac{d}{dt}-\overline{\overline{\mathcal{A}}}\right)\,\overline{\overline{\mathcal{E}}},\Phi\right]=\left\langle\overline{\overline{\mathcal{I}}},\Phi(0)\right\rangle_{\mathbb M}+
\int_0^\infty\left\langle\left(\frac{d}{dt}-\overline{\overline{\mathcal{A}}}\right)\,\overline{\overline{\mathcal{E}}}_f(t),\Phi(t)\right\rangle_{{\mathbb M}}\,dt=\left[\delta(t)\overline{\overline{\mathcal{I}}},\Phi\right],
$$
which implies \eqref{pairing}.
\qed

\medskip
Next, we will derive the properties of the relaxation tensor. We first state the following proposition:

\begin{Prop} There exist constants $\alpha_1>\alpha_2 >0$ independent of a.e.$\,x\in\Omega$ such that
\begin{equation}\label{bound of A}
\alpha_2\,\overline{\overline{\mathcal{I}}}\le-\overline{\overline{\mathcal{A}}}\le\alpha_1\overline{\overline{\mathcal{I}}}\,\,\,\text{a.e. $x\in\Omega$}.    
\end{equation}
\end{Prop}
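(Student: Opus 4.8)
The plan is to reduce the two-sided estimate on $-\overline{\overline{\mathcal{A}}}$ to an explicit sum-of-squares computation for $-\overline{\overline{\mathcal{L}}}$, transported through the congruence built from $\overline{\overline{\mathcal{D}}}$. Since $\overline{\overline{\mathcal{D}}}=\sqrt{\overline{\overline{\mathcal{C}}}}\in{\mathbb M}_+$ is self-adjoint for $\left\langle\,\cdot\,,\,\cdot\,\right\rangle$, for any $\underline{\mathcal{E}}\in{\mathbb V}$ the substitution $\underline{\mathcal{V}}:=\overline{\overline{\mathcal{D}}}\,\underline{\mathcal{E}}$ gives
$$
\left\langle -\overline{\overline{\mathcal{A}}}\,\underline{\mathcal{E}},\underline{\mathcal{E}}\right\rangle=\left\langle -\overline{\overline{\mathcal{D}}}\,\overline{\overline{\mathcal{L}}}\,\overline{\overline{\mathcal{D}}}\,\underline{\mathcal{E}},\underline{\mathcal{E}}\right\rangle=\left\langle -\overline{\overline{\mathcal{L}}}\,\underline{\mathcal{V}},\underline{\mathcal{V}}\right\rangle,
$$
so it suffices to sandwich the form of $-\overline{\overline{\mathcal{L}}}$ between positive multiples of $\Vert\underline{\mathcal{V}}\Vert^2$ and then control $\Vert\underline{\mathcal{V}}\Vert^2$ against $\Vert\underline{\mathcal{E}}\Vert^2$.

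First I would read off the entries of $\overline{\overline{\mathcal{L}}}$ from \eqref{L}: it is the symmetric bordered (arrow) tensor block matrix whose $(1,1)$ block is $-\left(\sum_{i=0}^n\eta_i^{-1}\right)I_d$, whose remaining diagonal blocks are $-\eta_i^{-1}I_d$, and whose only nonzero off-diagonal blocks are the border entries $\eta_i^{-1}I_d$ sitting in the first row and first column. Writing $\underline{\mathcal{V}}=(V_0,V_1,\cdots,V_n)$ and expanding the quadratic form, the cross terms combine to complete the square, yielding the key identity
$$
\left\langle -\overline{\overline{\mathcal{L}}}\,\underline{\mathcal{V}},\underline{\mathcal{V}}\right\rangle=\eta_0^{-1}|V_0|^2+\sum_{i=1}^n\eta_i^{-1}|V_0-V_i|^2,
$$
which already shows $-\overline{\overline{\mathcal{L}}}\ge 0$ with equality only at $\underline{\mathcal{V}}=0$. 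For the upper bound I would use $\eta_i^{-1}\le\eta_*^{-1}$ (Assumption \ref{fundamental assumption}(i)) together with $|V_0-V_i|^2\le 2|V_0|^2+2|V_i|^2$, which bounds the right-hand side by $\beta_1\Vert\underline{\mathcal{V}}\Vert^2$ with $\beta_1$ depending only on $n$ and $\eta_*$.

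The lower bound is the part requiring care, and the main point is to keep the constant independent of $x$. Since each $\eta_i\in L^\infty(\Omega)$ and there are only finitely many indices, set $\eta^*:=\max_{0\le i\le n}\Vert\eta_i\Vert_{L^\infty(\Omega)}$, so that $\eta_i^{-1}\ge(\eta^*)^{-1}$ a.e. Then the right-hand side is bounded below by $(\eta^*)^{-1}\big(|V_0|^2+\sum_{i=1}^n|V_0-V_i|^2\big)$, and I would observe that the linear map $T:(V_0,V_1,\cdots,V_n)\mapsto(V_0,V_0-V_1,\cdots,V_0-V_n)$ on $\left({\mathbb R}^{d\times d}_\text{sym}\right)^{n+1}$ is an involution ($T^2=\mathrm{id}$), hence a bijection. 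As $T$ is a fixed invertible linear map on a finite-dimensional space, entirely independent of $x$ and of the coefficients, its smallest singular value furnishes a constant $c>0$ with $\Vert T\underline{\mathcal{V}}\Vert^2\ge c\Vert\underline{\mathcal{V}}\Vert^2$; that is, $|V_0|^2+\sum_{i=1}^n|V_0-V_i|^2\ge c\Vert\underline{\mathcal{V}}\Vert^2$. This produces $\beta_2\,\overline{\overline{\mathcal{I}}}\le-\overline{\overline{\mathcal{L}}}\le\beta_1\,\overline{\overline{\mathcal{I}}}$ with $\beta_1>\beta_2>0$ independent of $x$.

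Finally I would transfer these bounds through $\overline{\overline{\mathcal{D}}}$. Since $\overline{\overline{\mathcal{D}}}^{\,\mathfrak t}=\overline{\overline{\mathcal{D}}}$ and $\overline{\overline{\mathcal{D}}}^{\,2}=\overline{\overline{\mathcal{C}}}$, we have $\Vert\underline{\mathcal{V}}\Vert^2=\Vert\overline{\overline{\mathcal{D}}}\,\underline{\mathcal{E}}\Vert^2=\left\langle\overline{\overline{\mathcal{C}}}\,\underline{\mathcal{E}},\underline{\mathcal{E}}\right\rangle$, so the strong convexity of each $C_i$ (Assumption \ref{fundamental assumption}(iii)) gives $\overline{\overline{\mathcal{C}}}\ge c_*\overline{\overline{\mathcal{I}}}$, while the $L^\infty$ bound on the $C_i$ gives $\overline{\overline{\mathcal{C}}}\le c^*\overline{\overline{\mathcal{I}}}$ with $c^*:=\max_i\,\mathrm{ess\,sup}\,\Vert C_i(x)\Vert$; hence $c_*\Vert\underline{\mathcal{E}}\Vert^2\le\Vert\underline{\mathcal{V}}\Vert^2\le c^*\Vert\underline{\mathcal{E}}\Vert^2$. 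Combining with the bounds on $-\overline{\overline{\mathcal{L}}}$ yields $\beta_2 c_*\Vert\underline{\mathcal{E}}\Vert^2\le\left\langle-\overline{\overline{\mathcal{A}}}\,\underline{\mathcal{E}},\underline{\mathcal{E}}\right\rangle\le\beta_1 c^*\Vert\underline{\mathcal{E}}\Vert^2$, so one may take $\alpha_2=\beta_2 c_*$ and any $\alpha_1\ge\beta_1 c^*$ with $\alpha_1>\alpha_2$. The one genuinely nontrivial step is precisely this uniform lower bound: positivity of $-\overline{\overline{\mathcal{L}}}$ is immediate from the sum-of-squares form, but extracting a coercivity constant that does not degenerate as $x$ varies relies on the coefficient-free bijectivity of $T$ together with the uniform two-sided bounds $(\eta^*)^{-1}\le\eta_i^{-1}\le\eta_*^{-1}$ and $c_*\,\overline{\overline{\mathcal{I}}}\le\overline{\overline{\mathcal{C}}}\le c^*\,\overline{\overline{\mathcal{I}}}$.
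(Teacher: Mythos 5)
Your proposal is correct, and its skeleton is the same as the paper's: factor $-\overline{\overline{\mathcal{A}}}=-\overline{\overline{\mathcal{D}}}\,\overline{\overline{\mathcal{L}}}\,\overline{\overline{\mathcal{D}}}$, pass the quadratic form through the congruence by $\overline{\overline{\mathcal{D}}}$ (controlled two-sidedly via \eqref{bound of D}, i.e.\ by $c_*$ and the $L^\infty$ bounds on the $C_i$), and reduce everything to the sum-of-squares identity $\left\langle-\overline{\overline{\mathcal{L}}}\,\underline{\mathcal{V}},\underline{\mathcal{V}}\right\rangle=\eta_0^{-1}|V_0|^2+\sum_{i=1}^n\eta_i^{-1}|V_0-V_i|^2$, which is exactly the identity \eqref{C} of Lemma~\ref{identity for Lb} written in the variables $V_0=C_0\psi$, $V_i=C_i\phi_i$. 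The only place you genuinely diverge is the extraction of a uniform coercivity constant from that sum of squares: the paper does it by a weighted Young inequality with the specific choice $\alpha=\tfrac12\eta_0^{-1}\bigl(\sum_{i=1}^n\eta_i^{-1}\bigr)^{-1}$, $\beta=\sqrt{1+\alpha}$, which leaves a residual $\tfrac12\eta_0^{-1}|V_0|^2+\sum_i\eta_i^{-1}\tfrac{\alpha}{1+\alpha}|V_i|^2$, whereas you observe that $T:(V_0,V_1,\dots,V_n)\mapsto(V_0,V_0-V_1,\dots,V_0-V_n)$ is a fixed, coefficient-free involution and invoke its smallest singular value. Both are valid and yield $x$-independent constants; your route is slightly cleaner conceptually and makes the uniformity in $x$ manifest, while the paper's gives explicit constants in terms of the $\eta_i$. (Two cosmetic points: your $\beta_1,\beta_2$ for the bounds on $-\overline{\overline{\mathcal{L}}}$ collide with the paper's $\beta_1,\beta_2$ reserved for $\overline{\overline{\mathcal{D}}}$ in \eqref{bound of D}, and you should note explicitly that the uniform upper bound $\eta_i\le\eta^*$ a.e.\ is what makes $\eta_i^{-1}\ge(\eta^*)^{-1}$ legitimate — but you do state both ingredients.)
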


\bigskip
Since 
\begin{equation}\label{bound of D}
    \beta_1\overline{\overline{\mathcal{I}}}\le\overline{\overline{\mathcal{D}}}\le\beta_2\overline{\overline{\mathcal{I}}}\,\,\,\text{a.e. $x\in\Omega$}
\end{equation}
for some constants $0<\beta_1<\beta_2$ independent of a.e.$\,x\in\Omega$, this easily follows from the identity
\begin{equation}    
\left\langle\left(\overline{\overline{\mathcal{C}}}L_b\right)\underline{\mathcal{Y}},\,\underline{\mathcal{Y}}\right\rangle 
=\left\langle\left(\overline{\overline{\mathcal{D}}}\,\overline{\overline{\mathcal{A}}}\,\overline{\overline{\mathcal{D}}}\right)\,\underline{\mathcal{Y}},\,\underline{\mathcal{Y}}\right\rangle\ 
=\left\langle\left(\overline{\overline{\mathcal{A}}}\,\overline{\overline{\mathcal{D}}}\right)\,\underline{\mathcal{Y}},\,\overline{\overline{\mathcal{D}}}\,\underline{\mathcal{Y}}\right\rangle
,\,\,\,\underline{\mathcal{Y}}\in{\mathbb{V}},\,\,\text{a.e. $x\in\Omega$}
\end{equation}
and the following lemma:
    
\begin{lemma}\label{identity for Lb}
For any $\underline{\mathcal{Y}}=(\psi,\phi)^{\mathfrak{t}}\in{\mathbb V}$, we have
\begin{equation}\label{C}
\left\langle -\big(\overline{\overline{\mathcal{C}}}L_b\big)\underline{\mathcal{Y}},\,\underline{\mathcal{Y}}\right\rangle=\eta_0^{-1}| C_0\psi|^2+\displaystyle\sum_{i=1}^n \eta_i^{-1} |C_0\psi-C_i\phi_i|^2\,\,\,\text{a.e. $x\in\Omega$},
\end{equation}
which immediately yields
\begin{equation}\label{computation for identity}
\left\langle -\big(\overline{\overline{\mathcal{C}}}L_b\big) \underline{\mathcal{Y}},\, \underline{\mathcal{Y}}\right\rangle\ge c \Vert\underline{\mathcal{Y}}\Vert^2\,\,\,\text{a.e.\,$x\in\Omega$}
\end{equation}
for some constant $c>0$ independent of $\underline{\mathcal{Y}}\in{\mathbb V}$ and a.e. $x\in\Omega$.
\end{lemma}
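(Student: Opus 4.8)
The plan is to establish the identity \eqref{C} by a direct block computation and then read off the coercivity \eqref{computation for identity} from the resulting sum of squares. The crucial point, which distinguishes this route from the L-method of Section~\ref{section 3}, is that only the \emph{major symmetry} of each $C_i$ (equivalently, self-adjointness with respect to the Frobenius product) is needed; the commutativity \eqref{A.9} plays no role whatsoever.

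First I would compute the blocks of $\overline{\overline{\mathcal{C}}}\,L_b$. Since $\overline{\overline{\mathcal{C}}}=\mathrm{diag}(C_0,\dots,C_n)$ multiplies $L_b$ on the left, the $(i,j)$ block is $C_i\,(L_b)_{ij}$ by \eqref{tensor2}. Writing $\underline{\mathcal{Y}}=(\psi,\phi_1,\dots,\phi_n)^{\mathfrak{t}}$ and reading off \eqref{2.8}, the $\psi$-component of $(\overline{\overline{\mathcal{C}}}L_b)\underline{\mathcal{Y}}$ is $-\big(\sum_{i=0}^n\eta_i^{-1}\big)C_0^2\psi+\sum_{j=1}^n\eta_j^{-1}C_0C_j\phi_j$, and its $\phi_i$-component $(1\le i\le n)$ is $\eta_i^{-1}C_iC_0\psi-\eta_i^{-1}C_i^2\phi_i$. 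I would then form $\langle(\overline{\overline{\mathcal{C}}}L_b)\underline{\mathcal{Y}},\underline{\mathcal{Y}}\rangle$ by pairing each component against the corresponding block of $\underline{\mathcal{Y}}$. Using major symmetry of $C_0$ and of $C_i$ to transfer one factor across the Frobenius product, one gets $(C_0^2\psi){:}\psi=|C_0\psi|^2$, $(C_i^2\phi_i){:}\phi_i=|C_i\phi_i|^2$, and both cross terms collapse to $(C_0\psi){:}(C_i\phi_i)$. Collecting, $-\langle(\overline{\overline{\mathcal{C}}}L_b)\underline{\mathcal{Y}},\underline{\mathcal{Y}}\rangle$ equals $\big(\sum_{i=0}^n\eta_i^{-1}\big)|C_0\psi|^2-2\sum_{i=1}^n\eta_i^{-1}(C_0\psi){:}(C_i\phi_i)+\sum_{i=1}^n\eta_i^{-1}|C_i\phi_i|^2$, which is exactly the completed-square expression $\eta_0^{-1}|C_0\psi|^2+\sum_{i=1}^n\eta_i^{-1}|C_0\psi-C_i\phi_i|^2$, i.e. \eqref{C}.

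For the coercivity \eqref{computation for identity}, I would argue as follows. Since each $\eta_i\in L^\infty(\Omega)$, setting $\eta^\ast:=\max_i\Vert\eta_i\Vert_{L^\infty(\Omega)}$ gives $\eta_i^{-1}\ge(\eta^\ast)^{-1}$ a.e., so \eqref{C} is bounded below by $(\eta^\ast)^{-1}\big(|C_0\psi|^2+\sum_{i=1}^n|C_0\psi-C_i\phi_i|^2\big)$. Strong convexity (Assumption~\ref{fundamental assumption}(iii)) combined with Cauchy--Schwarz yields $|C_i\xi|\ge c_\ast|\xi|$; hence $|C_0\psi|^2\ge c_\ast^2|\psi|^2$ controls $\psi$, while the triangle inequality gives $|\phi_i|^2\le c_\ast^{-2}|C_i\phi_i|^2\le 2c_\ast^{-2}\big(|C_0\psi-C_i\phi_i|^2+|C_0\psi|^2\big)$, controlling each $\phi_i$. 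Summing these estimates bounds $\Vert\underline{\mathcal{Y}}\Vert^2=|\psi|^2+\sum_i|\phi_i|^2$ by a fixed multiple of $|C_0\psi|^2+\sum_i|C_0\psi-C_i\phi_i|^2$, which delivers \eqref{computation for identity} with a constant $c>0$ depending only on $c_\ast,\eta^\ast,n$, and in particular independent of $\underline{\mathcal{Y}}$ and of a.e. $x$.

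The main obstacle is this last step rather than the identity: the individual summands $|C_0\psi-C_i\phi_i|^2$ are degenerate as quadratic forms in $(\psi,\phi_i)$ (each vanishes on a nontrivial subspace), so no single Kelvin--Voigt term can control $\phi_i$ on its own. Coercivity is recovered only by combining the Maxwell contribution $\eta_0^{-1}|C_0\psi|^2$, which pins down $\psi$, with the differences, which then pin down the $\phi_i$. This is precisely the place where the Maxwell component $M_0$ is indispensable for the argument.
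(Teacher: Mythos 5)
Your proof is correct and follows essentially the same route as the paper: the identity \eqref{C} comes from the same direct blockwise computation (the paper pairs $-L_b\underline{\mathcal{Y}}$ against $\overline{\overline{\mathcal{C}}}\,\underline{\mathcal{Y}}$ instead of moving each $C_i$ across the Frobenius product afterwards, which is the same use of major symmetry), and the completed square is identical. The only cosmetic difference is in deducing \eqref{computation for identity}: you bound $\Vert\underline{\mathcal{Y}}\Vert^2$ above by the quadratic form via the triangle inequality together with strong convexity and the $L^\infty$ bound on the $\eta_i$, whereas the paper reaches the same conclusion by a weighted Young inequality that isolates $|C_0\psi|^2$ and each $|C_i\phi_i|^2$ with explicit constants.
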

\proof It is enough to prove \eqref{C}. By the definition of $L_b$, we have
\begin{eqnarray*}
-\left\langle (\overline{\overline{\mathcal{C}}}L_b)\underline{\mathcal{Y}},\,\underline{\mathcal{Y}}\right\rangle \!\!
&=& \!\! \left\{ \left(\sum_{i=0}^n\eta_i^{-1} \right)C_0\psi -\sum_{i=1}^n\eta_i^{-1}C_i\phi_i \right\} : (C_0\psi)
-\sum_{i=1}^n \left\{ \eta_i^{-1}(C_0\psi-C_i\phi_i)\right\} : (C_i\phi_i)\\
&=&\left(\sum_{i=0}^n\eta_i^{-1}\right)|C_0\psi|^2 
-\sum_{i=1}^n\eta_i^{-1}\{(C_i\phi_i):(C_0\psi)\}
\\
& &\qquad\qquad\qquad\qquad\qquad\,\,\,-\sum_{i=1}^n\eta_i^{-1}\{( C_0\psi):(C_i\phi_i)\}
+\sum_{i=1}^n\eta_i^{-1}|C_i\phi_i|^2\\
&=&\eta_0^{-1}|C_0\psi|^2
+\sum_{i=1}^n\eta_i^{-1}|C_0\psi -C_i\phi_i|^2.
\end{eqnarray*}

Now, setting $\alpha:=\frac{1}{2}\eta_0^{-1}(\sum_{i=1}^n\eta_i^{-1})^{-1}$ and
$\beta:=\sqrt{1+\alpha}$ and using the inequality
\[
2(C_0\psi): (C_i\phi_i)=2(\beta C_0\psi):( \beta^{-1}C_i\phi_i)
\le \beta^2|C_0\psi|^2 +\beta^{-2}|C_i\phi_i|^2,
\]
we obtain 
\begin{align*}
\eta_0^{-1}|C_0\psi|^2
&+\sum_{i=1}^n\eta_i^{-1}|C_0\psi -C_i\phi_i|^2\\
=~&\frac{1}{2}\eta_0^{-1}|C_0\psi|^2
+\sum_{i=1}^n\eta_i^{-1}\left(\alpha |C_0\psi|^2+|C_0\psi -C_i\phi_i|^2\right)\\
=~&\frac{1}{2}\eta_0^{-1}|C_0\psi|^2
+\sum_{i=1}^n\eta_i^{-1}\left((1+\alpha) |C_0\psi|^2
-2(C_0\psi, C_i\phi_i)+|C_i\phi_i|^2\right)\\
\ge ~&\frac{1}{2}\eta_0^{-1}|C_0\psi|^2
+\sum_{i=1}^n\eta_i^{-1}\left((1+\alpha) |C_0\psi|^2
-\beta^2|C_0\psi|^2 -\beta^{-2}|C_i\phi_i|^2+|C_i\phi_i|^2\right)\\
= ~&\frac{1}{2}\eta_0^{-1}|C_0\psi|^2
+\sum_{i=1}^n\eta_i^{-1}\left(\frac{\alpha}{1+\alpha}|C_i\phi_i|^2\right).
\end{align*}
\qed

\medskip
For estimating $\overline{\overline{\mathcal{E}}}_f$, we provide the following lemma:
\begin{lemma}\label{estimate_exp}
\begin{equation}\label{tilde E_estimate}
e^{-\alpha_1 t}\overline{\overline{\mathcal{I}}}\le\overline{\overline{\mathcal{E}}}_f(t)\le e^{-\alpha_2 t} \overline{\overline{\mathcal{I}}}\,\,\,\,\text{a.e. $x\in\Omega$}
\end{equation}
holds.
\end{lemma}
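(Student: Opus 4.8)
The plan is to reduce the matrix-exponential inequality \eqref{tilde E_estimate} to a statement about the spectrum of $\overline{\overline{\mathcal{A}}}$ through the functional calculus for self-adjoint operators on the finite-dimensional inner-product space $\mathbb{V}$. First I would record that $\overline{\overline{\mathcal{A}}}=\overline{\overline{\mathcal{D}}}\,\overline{\overline{\mathcal{L}}}\,\overline{\overline{\mathcal{D}}}$ is self-adjoint with respect to $\langle\cdot,\cdot\rangle$. Indeed $\overline{\overline{\mathcal{D}}}=\sqrt{\overline{\overline{\mathcal{C}}}}\in\mathbb{M}_+$ and $\overline{\overline{\mathcal{L}}}\in\mathbb{M}$ are both symmetric by their very construction, so $\overline{\overline{\mathcal{A}}}^{\mathfrak{t}}=\overline{\overline{\mathcal{D}}}^{\mathfrak{t}}\,\overline{\overline{\mathcal{L}}}^{\mathfrak{t}}\,\overline{\overline{\mathcal{D}}}^{\mathfrak{t}}=\overline{\overline{\mathcal{A}}}$, and consequently $\overline{\overline{\mathcal{E}}}_f(t)=e^{t\overline{\overline{\mathcal{A}}}}$ is self-adjoint as well.

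Next I would invoke the preceding Proposition, which gives $\alpha_2\,\overline{\overline{\mathcal{I}}}\le-\overline{\overline{\mathcal{A}}}\le\alpha_1\,\overline{\overline{\mathcal{I}}}$, that is $-\alpha_1\,\overline{\overline{\mathcal{I}}}\le\overline{\overline{\mathcal{A}}}\le-\alpha_2\,\overline{\overline{\mathcal{I}}}$; for a self-adjoint operator this places the entire spectrum of $\overline{\overline{\mathcal{A}}}$ inside $[-\alpha_1,-\alpha_2]$. Writing the spectral decomposition $\overline{\overline{\mathcal{A}}}=\sum_j\mu_j\,\mathcal{P}_j$ with eigenvalues $\mu_j\in[-\alpha_1,-\alpha_2]$ and mutually orthogonal spectral projections $\mathcal{P}_j$, the functional calculus yields $\overline{\overline{\mathcal{E}}}_f(t)=e^{t\overline{\overline{\mathcal{A}}}}=\sum_j e^{t\mu_j}\,\mathcal{P}_j$.

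For $t\ge0$ the scalar inequality $e^{-\alpha_1 t}\le e^{t\mu_j}\le e^{-\alpha_2 t}$ holds for every $j$, because $x\mapsto e^{tx}$ is nondecreasing. Testing against an arbitrary $\underline{\mathcal{E}}\in\mathbb{V}$ and using the Pythagorean identity $\sum_j\Vert\mathcal{P}_j\underline{\mathcal{E}}\Vert^2=\Vert\underline{\mathcal{E}}\Vert^2$, I obtain $e^{-\alpha_1 t}\Vert\underline{\mathcal{E}}\Vert^2\le\langle\overline{\overline{\mathcal{E}}}_f(t)\,\underline{\mathcal{E}},\,\underline{\mathcal{E}}\rangle\le e^{-\alpha_2 t}\Vert\underline{\mathcal{E}}\Vert^2$, which is precisely \eqref{tilde E_estimate}. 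Since the constants $\alpha_1,\alpha_2$ furnished by the Proposition are independent of $x$, the bound holds uniformly for a.e. $x\in\Omega$.

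The computation is essentially routine once self-adjointness is secured; the only point deserving a little care is that the comparison is between $e^{t\overline{\overline{\mathcal{A}}}}$ and \emph{scalar} multiples of the identity, which commute with everything. Hence one may legitimately apply the single-operator functional calculus on the interval $[-\alpha_1,-\alpha_2]$ and need not worry about the well-known failure of $x\mapsto e^{x}$ to be operator monotone between two noncommuting operators.
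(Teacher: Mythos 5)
Your proof is correct. It differs from the paper's argument in mechanics, though both rest on the same input, namely the bound $\alpha_2\,\overline{\overline{\mathcal{I}}}\le-\overline{\overline{\mathcal{A}}}\le\alpha_1\,\overline{\overline{\mathcal{I}}}$ from the preceding Proposition. You diagonalize the self-adjoint operator $\overline{\overline{\mathcal{A}}}$ and apply the scalar exponential to its eigenvalues, so the two-sided bound on $e^{t\overline{\overline{\mathcal{A}}}}$ drops out in one step from $\mu_j\in[-\alpha_1,-\alpha_2]$. The paper instead works with the exponential of the \emph{positive} operator $-\overline{\overline{\mathcal{A}}}$: it bounds the power series $e^{-t\overline{\overline{\mathcal{A}}}}=\sum_j (j!)^{-1}t^j(-\overline{\overline{\mathcal{A}}})^j$ termwise (using $\alpha_2^j\,\overline{\overline{\mathcal{I}}}\le(-\overline{\overline{\mathcal{A}}})^j\le\alpha_1^j\,\overline{\overline{\mathcal{I}}}$, valid because all the series coefficients are nonnegative), and then inverts the resulting operator inequality via $0<m_1\overline{\overline{\mathcal{I}}}\le\overline{\overline{\mathcal{M}}}\le m_2\overline{\overline{\mathcal{I}}}\Rightarrow m_2^{-1}\overline{\overline{\mathcal{I}}}\le\overline{\overline{\mathcal{M}}}^{-1}\le m_1^{-1}\overline{\overline{\mathcal{I}}}$ to pass from $e^{-t\overline{\overline{\mathcal{A}}}}$ to $e^{t\overline{\overline{\mathcal{A}}}}$. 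Your route buys a cleaner argument: it sidesteps both the inversion lemma and the (easily overlooked) reason one cannot bound the series for $e^{t\overline{\overline{\mathcal{A}}}}$ directly, namely that $(t\overline{\overline{\mathcal{A}}})^j$ alternates in sign-definiteness. What the paper's route buys is that it never invokes the spectral decomposition explicitly, only quadratic-form inequalities, which keeps the whole of Section 5 at the level of the inner product $\left\langle\cdot,\cdot\right\rangle$ on ${\mathbb V}$. One small remark: as you note, your final scalar inequality $e^{-\alpha_1 t}\le e^{t\mu_j}\le e^{-\alpha_2 t}$ requires $t\ge0$; the lemma as stated suppresses this restriction, but it is the intended range since $\overline{\overline{\mathcal{E}}}_f$ is only used on $[0,\infty)$ in \eqref{def E}.
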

\begin{proof}
By \eqref{bound of A}, $\alpha_2\,\overline{\overline{\mathcal{I}}}\le-\overline{\overline{\mathcal{A}}}\le\alpha_1\,\overline{\overline{\mathcal{I}}}\,\,$ a.e.$\,x\in\Omega$. This yields
$$
e^{\alpha_2t}\,\overline{\overline{\mathcal{I}}}\le e^{-t\overline{\overline{\mathcal{A}}}}=\displaystyle\sum_{j=0}^\infty (j!)^{-1}t^j(-\overline{\overline{\mathcal{A}}})^j\le e^{\alpha_1t}\,\overline{\overline{\mathcal{I}}}\,\,\,\,\text{a.e.$\,x\in\Omega$}.
$$
Then, since 
$$0<m_1\overline{\overline{\mathcal{I}}}\le\overline{\overline{\mathcal{M}}}\le m_2\,\overline{\overline{\mathcal{I}}}\Longrightarrow
m_2^{-1}\,\overline{\overline{\mathcal{I}}}\le\overline{\overline{\mathcal{M}}}^{-1}\le m_1^{-1}\,\overline{\overline{\mathcal{I}}}$$
for some positive numbers $m_1<m_2$, due to $\Vert\underline{\mathcal{Y}}\Vert^2=\left[\overline{\overline{\mathcal{M}}}^{\,2}(\overline{\overline{\mathcal{M}}}^{-1}\underline{\mathcal{Y}}),\overline{\overline{\mathcal{M}}}^{-1}\underline{\mathcal{Y}}\right]$ for any $\underline{\mathcal{Y}}\in{\mathbb V}$, we immediately have \eqref{tilde E_estimate}.
\end{proof}

\bigskip

\noindent
Via \eqref{extraction of G(t)} and the full symmetry of $\overline{\overline{\mathcal{E}}}(t)$ (see \eqref{factorization}, \eqref{tilde E}, \eqref{def E}), we can list properties of $G(t)$ which follow from those of $\overline{\overline{\mathcal{E}}}(t)$:

\medskip\medskip

\noindent
\textbf{Properties of the relaxation tensor}. \textit{
\begin{itemize}
\item [{\rm (i)}] Causality of $G(t)$ follows from that of $\overline{\overline{\mathcal{E}}}(t)$.
\item [{\rm (ii)}] Positivity and full symmetry of $G(t)$ follow from those of $\overline{\overline{\mathcal{E}}}(t)$ and $\overline{\overline{\mathcal{D}}}$.
\item [{\rm (iii)}] $G(t)\in C^\infty([0,\infty);L^\infty(\Omega;{\mathbb R}^{d\times d\times d\times d}))$ follows from that of $\overline{\overline{\mathcal{E}}}(t)\big|_{t\ge0}=\overline{\overline{\mathcal{E}}}_f(t)$.
\item [{\rm (iv)}] The estimates
\begin{equation}\label{estimates of G(t)}
\left\{
\begin{array}{ll}
\beta_1^2\alpha_2^{2j}e^{-\alpha_1 t}I_d\le G^{(2j)}(t)\le\beta_2^2\alpha_1^{2j}e^{-\alpha_2 t}I_d\,\,\text{a.e.}\,x\in\Omega,\\
\\
-\beta_2^2\alpha_1^{2j+1}e^{-\alpha_1 t}I_d\le G^{(2j+1)}(t)\le-\beta_1^2\alpha_2^{2j+1}e^{-\alpha_2 t}I_d\,\,\text{a.e.}\,x\in\Omega
\end{array}
\right.    
\end{equation}
hold for $j=0,1,\cdots$, where $G^{(k)}(t)$ denote the $k$-th derivative of $G(t)$ with respect to time. These estimates follow from \eqref{bound of A}, \eqref{bound of D}, and the following fact:
$$
\left\{
\begin{array}{ll}
\left(\frac{d}{dt}\right)^{2j}e^{t\overline{\overline{\mathcal{A}}}}=(-\overline{\overline{\mathcal{A}}})^je^{-t(-\overline{\overline{\mathcal{A}}})}(-\overline{\overline{\mathcal{A}}})^j,\\
\\
\left(\frac{d}{dt}\right)^{2j+1}e^{t\overline{\overline{\mathcal{A}}}}=-(-\overline{\overline{\mathcal{A}}})^{j+1/2}e^{-t(-\overline{\overline{\mathcal{A}}})}(-\overline{\overline{\mathcal{A}}})^{j+1/2}
\end{array}
\right.
$$
for $j=0,1,\cdots$.
\end{itemize}
}

\begin{remark}
Positivity of the relaxation tensor obtained by the usual method does not directly follow from \eqref{G(t)2}. Nevertheless, we have seen that \eqref{integro-diff} together with \eqref{star1} and \eqref{star2} is equivalent to \eqref{2.6}, so the usual method and that of \eqref{extraction of G(t)} produce the same relaxation tensors.
Hence, the relaxation tensor obtained by the usual method also satisfies positivity.
    
\end{remark}

\bigskip
From these properties of the relaxation tensor, it is almost immediate to see that the relaxation tensor $G(t)$ satisfies Assumption \ref{exponential}. Hence, we have the following theorem:
\begin{Th}
The BVS with the derived $G(t)$ has the exponential decay property for its solutions of the initial boundary value problem \eqref{ebm-prob2}.    
\end{Th}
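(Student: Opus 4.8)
The plan is to reduce the theorem to the abstract exponential-decay result of \cite{HLN}, whose hypotheses are exactly Assumption~\ref{exponential}. Thus it suffices to verify that the relaxation tensor $G(t)$ built in \eqref{extraction of G(t)}, together with $K(x):=G(x,0)$ and $H(x,t):=-\dot G(x,t)$, satisfies (i)--(v) of that assumption. Condition (i) is a hypothesis on the domain ($\Gamma_D\neq\emptyset$), which we simply impose. The regularity $H\in C^2([0,\infty);L^\infty(\Omega))$ needed in (ii), the major symmetry in (iii), and the membership $K\in L^\infty(\Omega)$ all follow immediately from Properties (ii) (positivity and full symmetry) and (iii) ($C^\infty$-smoothness) of the relaxation tensor; strong convexity of $K$ in (iv) is the lower bound $K=G(0)\ge\beta_1^2 I_d$ read off from \eqref{estimates of G(t)} at $j=0,\,t=0$.

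The structural core is the representation $G^{(k)}(t)=$ the $(1,1)$ block of $\overline{\overline{\mathcal{D}}}\,\overline{\overline{\mathcal{A}}}^{\,k}e^{t\overline{\overline{\mathcal{A}}}}\,\overline{\overline{\mathcal{D}}}$, obtained by differentiating \eqref{extraction of G(t)} with $\overline{\overline{\mathcal{E}}}(t)=e^{t\overline{\overline{\mathcal{A}}}}$ for $t\ge0$, combined with the two-sided spectral bound $\alpha_2\,\overline{\overline{\mathcal{I}}}\le-\overline{\overline{\mathcal{A}}}\le\alpha_1\overline{\overline{\mathcal{I}}}$ from \eqref{bound of A}. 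Because $-\overline{\overline{\mathcal{A}}}$ and $e^{t\overline{\overline{\mathcal{A}}}}$ are commuting self-adjoint operators, an eigenvalue comparison on $[\alpha_2,\alpha_1]$ yields, for $t\ge0$, the operator inequalities $\alpha_2(-\overline{\overline{\mathcal{A}}})e^{t\overline{\overline{\mathcal{A}}}}\le(-\overline{\overline{\mathcal{A}}})^2e^{t\overline{\overline{\mathcal{A}}}}\le\alpha_1(-\overline{\overline{\mathcal{A}}})e^{t\overline{\overline{\mathcal{A}}}}$ and $(-\overline{\overline{\mathcal{A}}})^3e^{t\overline{\overline{\mathcal{A}}}}\le\alpha_1^2(-\overline{\overline{\mathcal{A}}})e^{t\overline{\overline{\mathcal{A}}}}$. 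Conjugation by $\overline{\overline{\mathcal{D}}}$ preserves these, and extracting the $(1,1)$ block is order-preserving (restricting each quadratic form to test vectors $(\xi,0,\dots,0)^{\mathfrak t}\in{\mathbb V}$ singles out that block). Noting $-\dot H=G^{(2)}$ equals the $(1,1)$ block of $\overline{\overline{\mathcal{D}}}(-\overline{\overline{\mathcal{A}}})^2e^{t\overline{\overline{\mathcal{A}}}}\overline{\overline{\mathcal{D}}}$ and $H$ equals that of $\overline{\overline{\mathcal{D}}}(-\overline{\overline{\mathcal{A}}})e^{t\overline{\overline{\mathcal{A}}}}\overline{\overline{\mathcal{D}}}$, while $\ddot H=-G^{(3)}$ equals that of $\overline{\overline{\mathcal{D}}}(-\overline{\overline{\mathcal{A}}})^3e^{t\overline{\overline{\mathcal{A}}}}\overline{\overline{\mathcal{D}}}$, these translate into $-\alpha_1 H\le\dot H\le-\alpha_2 H$ and $\ddot H\le\alpha_1^2 H$, i.e.\ the differential inequalities of (ii) with $\kappa_1=\alpha_1$, $\kappa_2=\alpha_2$, $\kappa_3=\alpha_1^2$. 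Positivity of $H$ (hence (iv) for $H$, for each fixed $t$) comes from the same sandwiching at $k=1$, and the bound \eqref{exp decay of the |G| and |dot G|} with $\tilde\kappa_4=\alpha_2$ is read directly off \eqref{estimates of G(t)} for $k=1,2$.

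For (v), the key is the telescoping identity
\begin{equation*}
K(x)-\int_0^T H(x,t)\,dt=G(x,0)+\int_0^T\dot G(x,t)\,dt=G(x,T),
\end{equation*}
which turns (v) into the two-sided bound on $G(T)$ furnished by \eqref{estimates of G(t)} at $j=0$, namely $\beta_1^2 e^{-\alpha_1 T}I_d\le G(T)\le\beta_2^2 e^{-\alpha_2 T}I_d$. Once (i)--(v) are checked, the theorem follows by invoking \cite{HLN}. The main obstacle I anticipate is precisely the differential-inequality part of (ii): one must push spectral inequalities for $\overline{\overline{\mathcal{A}}}$ on the whole space ${\mathbb V}$ down to quadratic-form inequalities for the single $(1,1)$ block $G$, which requires careful use of the commuting functional calculus of $\overline{\overline{\mathcal{A}}}$ together with the order-preservation of both the $\overline{\overline{\mathcal{D}}}$-conjugation and the block restriction. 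A secondary, more bookkeeping-level point is reconciling the multiplicative prefactors $\beta_1^2,\beta_2^2$ in the bound on $G(T)$ with the purely exponential two-sided form demanded by (v); this is the one place where the matching is not wholly automatic and must be handled either by reading (v) up to positive constants or by a suitable normalization, after which the identity $K-\int_0^T H\,dt=G(T)$ closes the argument.
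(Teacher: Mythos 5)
Your proposal is correct and follows essentially the same route as the paper: the paper's entire proof consists of observing that the listed Properties of the relaxation tensor (causality, full symmetry, smoothness, and the two-sided exponential estimates \eqref{estimates of G(t)} derived from \eqref{bound of A} and \eqref{bound of D}) imply Assumption \ref{exponential}, and then invoking \cite{HLN}; you have simply filled in the verification the authors dismiss as ``almost immediate.'' Your closing caveat about the prefactors $\beta_1^2,\beta_2^2$ in condition (v) is a fair observation about the paper's own statement of that assumption rather than a gap in your argument, and your explicit handling of the differential inequalities in (ii) via the commuting functional calculus, $\overline{\overline{\mathcal{D}}}$-conjugation, and order-preserving $(1,1)$-block extraction is exactly the mechanism implicit in the paper's estimates \eqref{estimates of G(t)}.
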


\section{Discussion}

We introduced a new method for constructing the relaxation tensor for the EBM in the most general setting. That is, we only assume that the elastic tensors of the springs and viscosity of the dashpots are bounded measurable in the reference domain $\Omega$, and the tensors are anisotropic satisfying the full symmetry and strong convexity conditions. Furthermore, our method enabled to prove the sufficient properties needed to guarantee the exponential decay property of solutions of the initial boundary value problem for the VBS with mixed-type homogeneous boundary conditions and without external force. 

The method presented, here, can be applied to other spring-dashpot models, such as the extended Maxwell model, the standard linear solid model, and the Kelvin-Voigt model in the most general setting. The EBM is the most cumbersome model for which to construct the relaxation tensor. 
For the L-method to apply, we need commutativity of the elasticity tensors.


The Maxwell component in the EBM appears to be essential. If we were to omit it, the following would happen. From \eqref{sigma} and \eqref{2.6}, we would get
$$
\begin{aligned}
e[u]=\sum_{j=1}^n\phi_j,\,\,\,\,\,
\partial_t\phi_i=\eta_i^{-1}\sigma-\eta_i^{-1}C_i\phi_i,\,\,\,1\leq i\leq n.
\end{aligned}
$$
By expressing each $\phi_i$ in terms of $\sigma$, we then straightforwardedly find that
$$
\begin{aligned}
\hat{\sigma}(s)=s^{-1}\bigg(\sum_{i=1}^n(sI+\eta_i^{-1}C_i)^{-1}\bigg)^{-1}(s\hat{e}(s)).
\end{aligned}
$$
Thus, due to the presence of the factor $s^{-1}$ on the right-hand side of this equation, the relaxation tensor could not decay exponentially.

We claim that we obtained a complete description of the most general EBM with implied properties which are natural in the physics context.

\medskip

\bigskip
{\bf Acknowledgements:} 
The first author was supported by the Simons Foundation under the MATH + X program, the National Science Foundation under grant DMS-2108175, and the corporate members of the Geo-Mathematical Imaging Group at Rice University. The third author was partially supported by the Ministry of Science and Technology of Taiwan.
The fourth author was partially supported by JSPS KAKENHI (Grant Nos. JP22K03366). The fifth author was partially supported by JP19K03559.

\bibliographystyle{initials}
\bibliography{references}

\begin{thebibliography}{99}

\bibitem{AndersonMinster_1979}
D. Anderson and J. Minster, The Frequency Dependence of {{Q}} in the {{Earth}} and Implications for Mantle Rheology and {{Chandler}} Wobble, Geophysical Journal International, 58 (1979) pp. 431-440.


\bibitem{JaziaLombardBellis2013}
A. Jazia, B. Lombard and C. Bellis, Wave Propagation in a Fractional Viscoelastic {{Andrade}} Medium: Diffusive Approximation and Numerical Modeling, Wave Motion (2014), http://dx.doi.org/10.1016/j.wavemoti.2014.03.011.

\bibitem{BirkSong_2010}
C. Birk and C. Song, An Improved Non-Classical Method for the Solution of Fractional Differential Equations, Computational Mechanics, 46 (2010) pp. 721-734.

\bibitem{BS}
M. Birman and M. Solomjak, Spectra Theory of Self-Adjoint Operators in Hilbert Space, D. Reidel Publishing Company, Dordect/Boston/Lancaster/Tokyo, 1987.

\bibitem{Browaeys}
J. Browaeys and S. Chevrot, Decomposition of the elastic tensor and geophysical applications, Geophysical Journal International, 159 (2004) pp. 667-678.

\bibitem{Carcione}
J. Carcione and F. Cavallini, A rheological model for anelastic anisotropic media with application to seismic wave propagation, Geophys. J. Int. , 119 (1994) pp. 338-348.


\bibitem{Dafermos}
C. Dafermos, An abstract Volterra equation with application to linear viscoelasticity, J. Differential Equations \, 7 (1970) 554-569.


\bibitem{HLN}
M. de Hoop, C-L. Lin and G. Nakamura, Uniform decaying property of solutions for anisotropic viscoelastic systems, arXiv 2308.03988 (2023).

\bibitem{HKLN}
M. de Hoop, M. Kimura, C-L. Lin and G. Nakamura, Resolvent estimates for viscoelastic systems of extended Maxwell Type and their applications, arXiv 2308.1622v (2023) to appear in SIAM J. Math. Analysis.

\bibitem{EfroimskyLainey_2007}
M. Efroimsky and V. Lainey, Physics of Bodily Tides in Terrestrial Planets and the Appropriate Scales of Dynamical Evolution, Journal of Geophysical Research: Planets, 112 (2007) pp. 2007JE002908.



\bibitem{GevorgyanBoueRagazzoRuizCorreia_2020}
Y. Gevorgyan, G. Bou{\'e}, C. Ragazzo, L. Ruiz, S. Lucas and A. Correia, Andrade Rheology in Time-Domain. {{Application}} to {{Enceladus}}' Dissipation of Energy Due to Forced Libration, Icarus,  343 (2020) pp. 113610.


\bibitem{Hetland}
E. Hetland and B. Hager, The effects of rheological layering on post-seismic deformation, Geophys. J. Int., 166 (2006) pp. 277-292.


\bibitem{Ivins}
E. Ivins, L. Caron, S. Adhikari and E. Larour, Notes on an compressible extended Burgers model of rheology, Geophys. J. Int., 228 (2022) pp. 1975-1991.




\bibitem{LauFaul_2019}
H. Lau and U. Faul, Anelasticity from Seismic to Tidal Timescales: {{Theory}} and Observations, Physics of the Earth and Planetary Interiors, 508 (2019) pp. 18-29.

\bibitem{LNQ}
H. Li, V. Nistor and Y. Qiao, Uniform Shift Estimates for Transmission Problems and Optimal Rates of Convergence for the Parametric Finite Element Method. In: I. Dimov, I. Farago and L. Vulkov (eds), Numerical Analysis and Its Applications. NAA 2012. Lecture Notes in Computer Science, vol. 8236. Springer, Berlin, Heidelberg. 







\bibitem{Nikabadze}
M. Nikagadze, Eigenvalue problems for tensor-block matrices and their applications to mechanics, J. Math. Sci., 250 (2020) pp.895-931.

\bibitem{Smith}
M. Smith, Synthesis of mechanical networks: The inerter. IEEE Transactions on Automatic Control, 47 (2002) pp. 1648-1662.


\bibitem{YuenPeltier_1982}
D. Yuen and W. Peltier, Normal Modes of the Viscoelastic Earth, Geophysical Journal International, 69 (1982) pp. 495-526.

\bibitem{Zemanian}
A. Zemanian, Distribution Theory and Transform Analysis, Dover Publications, New York, 1987.


\end{thebibliography}

\end{document}